\newcommand{\bbA}{\mathbb{A}}
\newcommand{\bbB}{\mathbb{B}}
\newcommand{\bbC}{\mathbb{C}}
\newcommand{\bbD}{\mathbb{D}}
\newcommand{\bbQ}{\mathbb{Q}}
\newcommand{\bbR}{\mathbb{R}}
\newcommand{\bbZ}{\mathbb{Z}}
\newcommand{\bbZp}{{\mathbb{Z}_p}}
\newcommand{\bfL}{\mathbf{L}}
\newcommand{\bfR}{\mathbf{R}}
\newcommand{\bs}{\backslash}
\newcommand{\calG}{\mathcal{G}}
\newcommand{\calH}{\mathcal{H}}
\newcommand{\calL}{\mathcal{L}}
\newcommand{\calO}{\mathcal{O}}
\newcommand{\calR}{\mathcal{R}}
\DeclareMathOperator{\chr}{char}
\newcommand{\cn}{\colon}
\DeclareMathOperator{\coker}{coker}
\newcommand{\cont}{\mathrm{cont}}
\newcommand{\de}{\delta}
\newcommand{\dR}{\mathrm{dR}}
\newcommand{\ep}{\epsilon}
\DeclareMathOperator{\Exp}{Exp}
\DeclareMathOperator{\Fib}{Fib}
\DeclareMathOperator{\Fil}{Fil}
\newcommand{\fkS}{\mathfrak{S}}
\newcommand{\fks}{\mathfrak{s}}
\newcommand{\fkT}{\mathfrak{T}}
\newcommand{\fkt}{\mathfrak{t}}
\newcommand{\Frob}{\mathrm{Frob}}
\newcommand{\Ga}{\Gamma}
\newcommand{\ga}{\gamma}
\DeclareMathOperator{\Gal}{Gal}
\DeclareMathOperator{\GL}{GL}
\DeclareMathOperator{\Hom}{Hom}
\newcommand{\Iw}{\mathrm{Iw}}
\DeclareMathOperator{\Ind}{Ind}
\newcommand{\inv}{^{-1}}
\newcommand{\La}{\Lambda}
\newcommand{\la}{\lambda}
\DeclareMathOperator{\loc}{loc}
\newcommand{\Lotimes}{\mathop{\stackrel{\bfL}{\otimes}}\displaylimits}
\newcommand{\llim}{\varprojlim}
\newcommand{\Nekovar}{{Nekov\'a\v{r}}}
\newcommand{\ov}[1]{{\overline{#1}}}
\DeclareMathOperator{\Pic}{Pic}
\DeclareMathOperator{\pr}{pr}
\DeclareMathOperator{\proj}{proj}
\DeclareMathOperator{\rank}{rank}
\DeclareMathOperator{\res}{res}
\newcommand{\rig}{\mathrm{rig}}
\newcommand{\scrC}{\mathscr{C}}
\newcommand{\scrE}{\mathscr{E}}
\newcommand{\scrG}{\mathscr{G}}
\newcommand{\Sel}{\mathrm{Sel}}
\DeclareMathOperator{\Sym}{Sym}
\DeclareMathOperator{\Tor}{Tor}
\newcommand{\tors}{\mathrm{tors}}
\DeclareMathOperator{\Tw}{Tw}
\newcommand{\vphi}{\varphi}
\newcommand{\wt}[1]{{\widetilde{#1}}}
\newcommand{\BC}{\mathrm{BC}}
\DeclareMathOperator{\Col}{Col}
\DeclareMathOperator{\cond}{cond}
\newcommand{\cris}{\mathrm{cris}}
\DeclareMathOperator{\invt}{inv}
\newcommand{\prop}{{\mathrm{pro}\text{-}p}}
\DeclareMathOperator{\summ}{sum}
\newtheorem{theorem}{Theorem}[section]
\newtheorem{lemma}[theorem]{Lemma}
\theoremstyle{definition}
\newtheorem{definition}[theorem]{Definition}
\newtheorem{remark}[theorem]{Remark}
\begin{document}

\setcounter{section}{6}
\setcounter{equation}{25}

\title[Symmetric powers, II]{Iwasawa theory for symmetric powers of CM
  modular forms at nonordinary primes, II}
\author{Robert Harron}
\author{Jonathan Pottharst}
\date\today
\subjclass[2010]{11R23, 11F80, 11F67}
\keywords{Iwasawa theory, Main Conjectures, CM modular forms, Symmetric powers, finite-slope Selmer modules}
\thanks{The first author is partially supported by NSA Young Investigator Grant \#H98230-13-1-0223 and NSF RTG Grant ``Number Theory and Algebraic Geometry at the University of Wisconsin''.}

\begin{abstract}
Continuing the study of the Iwasawa theory of symmetric powers of CM
modular forms at supersingular primes begun by the first author and
Antonio Lei, we prove a Main Conjecture equating the ``admissible''
$p$-adic $L$-functions to the characteristic ideals of
``finite-slope'' Selmer modules constructed by the second author.  As
a key ingredient, we improve Rubin's result on the Main Conjecture of
Iwasawa theory for imaginary quadratic fields to an equality at inert
primes.
\end{abstract}
\maketitle
\tableofcontents

\section*{Introduction}

The study of the Iwasawa theory of symmetric powers of CM modular
forms at supersingular primes was begun by the first author and
Antonio Lei in \cite{HL}.  They constructed two types of $p$-adic
$L$-functions: ``admissible'' ones in the sense of Panchishkin and
Dabrowski, and ``plus and minus'' ones in the sense of Pollack.  They
also constructed ``plus and minus'' Selmer modules in the sense of
Kobayashi, and, using Kato's Euler system, they compared them to the
latter $p$-adic $L$-functions via one divisibility in a main
conjecture.  The present paper performs the analogous comparison
between the admissible $p$-adic $L$-functions and the ``finite-slope''
Selmer modules in the sense of the second author.  In order to get an
identity of characteristic ideals, rather than just a divisibility, we
improve the work of Rubin on the Main Conjecture of Iwasawa theory for
imaginary quadratic fields at inert primes \cite{R,R2} to give an
equality unconditionally.  Rubin's work has since been used by various
authors to derive other divisibilities; an examination of these
derivations will show that our work upgrades most of these
divisibilities to identities.

The first section of this paper is written as a direct continuation of
\cite{HL}; all numbered references to equations, theorems, etc.\ in it
are to the two papers commonly, except for bibliographical citations,
which are to the references section here.  In this section, we recall
the relevant setup from \cite{HL}, as well as the theory of
finite-slope Selmer groups from \cite{P}.  Then we give our results
about finite-slope Selmer modules of CM modular forms and their
symmetric powers at supersingular primes.  The second section is
written independently of \cite{HL} and the first section.  In it we
recall the notations from \cite{R,R2} and then treat the Iwasawa
theory of imaginary quadratic fields at inert primes.

\subsection*{Acknowledgement}

The authors would like to thank Robert Pollack and Karl Rubin for
helpful conversations and correspondence.

\section{CM modular forms and their symmetric powers}

\subsection{Notations and hypotheses of \cite{HL}}

The prime $p$ is assumed odd.  We fix algebraic closures and
embeddings $\iota_\infty \cn \ov\bbQ \to \bbC$ and $\iota_p \cn
\ov\bbQ \to \ov\bbQ_p$, and use these for the definition of Galois
groups and decomposition groups.  In particular, we write $c \in
\Gal(\ov\bbQ/\bbQ)$ for the complex conjugation induced by
$\iota_\infty$.

We normalize reciprocity maps of class field theory to send
uniformizers to arithmetic Frobenius elements.  If $E/\bbQ_p$ is a
finite extension, we normalize duals of $E$-linear Galois
representations by $V^* = \Hom_E(V,E(1))$, and Fontaine's functors by
$\bbD_\cris(V) = \Hom_{\bbQ_p[G_{\bbQ_p}]}(V,\bbB_\cris)$ and
$\wt\bbD_\cris(V) = (\bbB_\cris \otimes_{\bbQ_p} V)^{G_{\bbQ_p}}$.

For $n \leq \infty$ we write $k_n = \bbQ(\mu_{p^n})$ and $\bbQ_{p,n} =
\bbQ_p(\mu_{p^n})$.  The cyclotomic character $\chi$ induces an
isomorphism $G_\infty := \Gal(\bbQ_{p,\infty}/\bbQ_p) \cong
\bbZ_p^\times$, and $G_\infty$ factors uniquely as $\Delta \times \Ga$
in such a way that $\chi$ induces isomorphisms $\Delta \cong
\mu_{p-1}$ and $\Ga \cong 1+p\bbZ_p$.  We fix a topological generator
$\ga_0$ of $\Ga$.

For a finite extension $E$ of $\bbQ_p$ and $G=G_\infty,\Ga$, we write
$\La_{\calO_E}(G) = \calO_E[\![G]\!]$ for the Iwasawa algebra of $G$
with coefficients in $\calO_E$ and we write $\La_E(G) =
\La_{\calO_E}(G)\otimes_{\calO_E} E$.  We let $\calH_{r,E}(G)$ be the
$E$-valued $r$-tempered distributions on $G$ for $r \in \bbR_{\geq 0}$
and, $\calH_{\infty,E}(G) = \bigcup_r \calH_{r,E}(G)$.  These objects
are stable under the involution $\iota$ (resp.\ twisting operator
$\Tw_n$ for $n \in \bbZ$) obtained by continuity and $E$-linearity by
the rule $\sigma \mapsto \sigma^{-1}$ (resp.\ $\sigma \mapsto
\chi(\sigma)^n\sigma$) on group elements $\sigma \in G$.  If $G$ acts
on $M$, then $M^\iota$ denotes $M$ with $G$ action composed through
$\iota$.

We fix an imaginary quadratic field $K \subset \ov\bbQ$, considered as
a subfield of $\bbC$ via $\iota_\infty$, with ring of integers $\calO$
and quadratic character $\ep_K \cn \Gal(K/\bbQ) \cong \{\pm1\}$.  We
assume $p$ is inert in $K$, i.e.\ $\ep_K(p)=-1$, and write $\calO_p$
(resp.\ $K_p$) for the completion of $\calO$ (resp.\ $K$) at $p\calO$.

We fix a newform $f$ of weight $k \geq 2$, level $\Ga_1(N)$ with $p
\nmid N$, character $\ep$, and CM by $K$.  We write $\psi$ and $\psi^c
= \psi \circ c$ for the algebraic Hecke characters of $K$ associated
to $f$, and order them to have types $(k-1,0)$ and $(0,k-1)$,
respectively.  We write $E$ for a finite extension of $\bbQ_p$
containing $\iota_p\iota_\infty^{-1}\psi(\bbA_{K,f}^\times)$.  Note
that $E$ contains $\iota_p(K)$ and the images of the coefficients of
$f$ under $\iota_p\iota_\infty^{-1}$.  We write $V_\psi$ for the
one-dimensional $E$-linear Galois representation attached to $\psi$,
so that when $v \nmid p\cond(\psi)$ the action of $\Frob_v$ on
$V_\psi$ is by multiplication by $\psi(v)$.  We write $V_f$ for the
$E$-linear dual of the two-dimensional Galois representation
associated to $f$ by Deligne, with structure map $\rho_f \cn G_\bbQ
\to \GL(V_f)$, satisfying $\det(\rho_f) = \ep\chi^{k-1}$.  One has
$V_f \cong \Ind_K^\bbQ V_\psi$.  Since $p$ is inert in $K$, the
comparison of $L$-factors between $f$ and $\psi$ gives
$x^2-a_p(f)x+\ep(p)p^{k-1} = x^2-\psi(p)$, and in particular
$a_p(f)=0$ so that $f$ is nonordinary at $p$.  After perhaps enlarging
$E$, we fix a root $\alpha \in E$ of this polynomial, so that the
other root is $\ov\alpha=-\alpha$, and
\[
\psi(p) = \psi^c(p) = -\ep(p)p^{k-1} = -\alpha\ov\alpha = \alpha^2 =
\ov\alpha^2.
\]

Let $m \geq 1$ be an integer, and write $r = \lfloor m/2 \rfloor$ and
$\wt r = \lceil m/2 \rceil$.  We define $V_m = \Sym^m(V_f) \otimes
\det(\rho_f)^{-r}$.  There exist newforms $f_i$ for $0 \leq i \leq \wt
r-1$ (Proposition~3.4), of respective weights $k_i = (m-2i)(k-1)+1$,
levels $\Ga_1(N_i)$ with $p \nmid N_i$, characters $\ep_i$, and having
CM by $K$ (in particular, they are nonordinary at $p$), such that
\[
V_m \cong
\bigoplus_{i=0}^{\wt r-1}
  \left( V_{f_i} \otimes \chi^{(i-r)(k-1)} \right)
\oplus
\begin{cases}
\ep_K^r & m\text{ even}, \\
0 & m\text{ odd}.
\end{cases}
\]
As a consequence, the complex $L$-function (Corollary~3.5), Hodge
structure (Lemma~3.6), critical twists (Lemma~3.7), and structure of
$\bbD_\cris$ as a filtered $\vphi$-module (Lemmas~3.9 and 3.10), for
$V_m$ are all computed explicitly.  The same computations show that
the roots of $x^2+\ep_i(p)p^{k_i-1}$ are
$\alpha_i,\ov\alpha_i=-\alpha_i$, where
\[
\alpha_i = \begin{cases}
p^{(r-i)(k-1)} & m\text{ even}, \\
\alpha p^{(r-i)(k-1)} & m\text{ odd}.
\end{cases}
\]

For $\eta$ a Dirichlet character of prime-to-$p$ conductor, we denote
by $L_\eta$ its $p$-adic $L$-function (Theorem~4.1), considered as an
element of $\La_{\calO_E}(G_\infty)$ if $\eta$ is nontrivial and of
$[(\ga_0-1)(\ga_0-\chi(\ga_0))]^{-1}\La_{\calO_E(G_\infty)}$ if $\eta$
is the trivial character ${\bf1}$.  Let $\wt L_\eta \in
\La_{\calO_E}(G_\infty)$ then denote the regularized $p$-adic
$L$-function: if $\eta = {\bf1}$ then it is defined in \S5.2 by
removing the poles of $L_{\bf1}$, and otherwise it is defined to be
$L_\eta$.  Since the roots $\alpha_i,\ov\alpha_i$ of
$x^2+\ep_i(p)p^{k_i-1}$ have $p$-adic valuation $h_i := \frac{k_i-1}2
< k_i-1$, there are $p$-adic $L$-functions
$L_{f_i,\alpha_i},L_{f_i,\ov\alpha_i} \in \calH_{h_i,E}(G_\infty)$
(Theorem~4.2).  We let $\fkT$ denote the collection of tuples $\fkt =
(\fkt_0,\ldots,\fkt_{\wt r-1})$, where each $\fkt_i \in
\{\alpha_i,\ov\alpha_i\}$.  For each $\fkt \in \fkT$, we define the
\emph{admissible $p$-adic $L$-functions}
\[
L_{V_m,\fkt} =
\iota \left(\prod_{i=0}^{\wt r-1} \Tw_{(r-i)(k-1)} L_{f_i,\fkt_i}\right)
\cdot
\begin{cases}
L_{\ep_K^r} & m\text{ even}, \\
1 & m\text{ odd},
\end{cases}
\]
as well as their regularized variants $\wt L_{V_m,\fkt}$ where
$L_{\ep_K^r}$ is replaced by $\wt L_{\ep_K^r}$.  (The twist $\iota$
and the indexing are our only changes in conventions from \cite{HL}.
There, the index set $\fkS = \{\pm\}^{\wt r-1}$ is used, and $\fks \in
\fkS$ corresponds to $\fkt \in \fkT$ where $\fkt_i = \fks_i
p^{(r-i)(k-1)}$ if $m$ is even, and $\fkt_i = \fks_i \alpha
p^{(r-i)(k-1)}$ if $m$ is odd.)  Just as in the case $m=1$, these
functions can be decomposed in terms of appropriate products of twists
of ``plus and minus'' logarithms and ``plus and minus'' $p$-adic
$L$-functions (Corollary~6.9); their trivial zeroes and
$\calL$-invariants are known (Theorem~6.13), using work of Benois.

Finally, for $\theta = {\bf 1},\ep_K$, recall the Selmer groups
$\Sel_{k_\infty}(A_\theta^*)$ of equation (8), whose Pontryagin duals
$\Sel_{k_\infty}(A_\theta^*)^\vee$ are finitely generated, torsion
$\La_{\calO_E}(G_\infty)$-modules.

\subsection{Finite-slope Selmer complexes}\label{S:selmer}

For $\calG=G_\infty,\Ga$, we write $\calH_E(\calG)$ for the $E$-valued
locally analytic distributions on $\calG$; explicitly, one has
\[
\calH_E(\Ga) =
\left\{
\sum_{n \geq 0} c_n \cdot (\ga_0-1)^n \in E[\![\ga_0-1]\!]
\cn
\lim_{n \to \infty} |c_n|s^n = 0\ \text{ for all } 0 \leq s < 1
\right\},
\]
and $\calH_E(G_\infty) = \calH_E(\Ga) \otimes_E E[\Delta]$.  This ring
contains $\calH_{\infty,E}(\calG)$, and the subalgebra $\La_E(\calG)$
(hence also $\calH_{\infty,E}(\calG)$) is dense for a Fr\'echet
topology.  Although the ring is not Noetherian, it is a product of
B\'ezout domains if $\calG=G_\infty$ (resp.\ is a B\'ezout domain if
$\calG=\Ga$) as well as a Fr\'echet--Stein algebra, so that the
coadmissible $\calH_E(\calG)$-modules (in the sense of \cite{ST}) form
an abelian full subcategory of all $\calH_E(\calG)$-modules.
Coadmissible $\calH_E(\calG)$-modules include the finitely generated
ones, and have similar properties to finitely generated modules over a
product of PIDs if $\calG = G_\infty$ (resp.\ over a PID if $\calG =
\Ga$), including a structure theory and a notion of characteristic
ideal.  The algebra map $\La_E(\calG) \to \calH_E(\calG)$ is
faithfully flat so that the operation $M \mapsto M
\otimes_{\La_E(\calG)} \calH_E(\calG)$ is exact and fully faithful.
If $M$ is a finitely generated, torsion $\La_E(\calG)$-module, then
the operation is especially simple: the natural map $M
\xrightarrow{\otimes1} M \otimes_{\La_E(\calG)} \calH_E(\calG)$ is an
isomorphism, $\chr_{\calH_E(\calG)} M = (\chr_{\La_E(\calG)}
M)\calH_E(\calG)$, and, since $\calH_E(\calG)^\times =
\La_E(\calG)^\times$, all generators of this ideal actually belong to
$\chr_{\La_E(\calG)} M$.

Write $S$ for the set of primes dividing $Np$, write $\bbQ_S$ for the
maximal extension of $\bbQ$ inside $\ov\bbQ$ unramified outside $S
\cup \{\infty\}$, and let $G_{\bbQ,S} = \Gal(\bbQ_S/\bbQ)$ denote the
corresponding quotient of $G_\bbQ$.  Recall that $k_\infty \subset
\bbQ_S$, and that the natural map from $G_\infty$ to the quotient
$\Gal(k_\infty/\bbQ)$ of $G_{\bbQ,S}$ is an isomorphism; we henceforth
identify $G_\infty$ with this quotient of $G_{\bbQ,S}$.  The embedding
$\iota_p$ determines a decomposition group $G_p \subset G_{\bbQ,S}$,
and choosing additional algebraic closures and emeddings $\iota_\ell
\cn \ov\bbQ \hookrightarrow \ov\bbQ_\ell$ similarly determines
decomposition groups $G_\ell \subset G_{\bbQ,S}$ for each $\ell \mid
N$.  If $X$ is a continuous representation of $G_{\bbQ,S}$ and $G$ is
one of $G_{\bbQ,S}$ or $G_v$ with $v \in S$, we write $\bfR\Ga(G,X)$
for the class in the derived category of the complex of continuous
cochains of $G$ with coefficients in $X$, and we write $H^*(G,X)$ for
its cohomology.

We write $\La_E(G_\infty)^\iota$ (resp. $\calH_E(G_\infty)^\iota$) for
$\La_E(G_\infty)$ (resp.\ $\calH_E(G_\infty)$) considered with
$G_\infty$-action, and hence also $G_{\bbQ,S}$-action, with $g \in
G_\infty$ acting by multiplication by $g^{-1} \in G_\infty \subset
\La_E(G_\infty)^\times \subset \calH_E(G_\infty)^\times$.  If $V$ is a
continuous $E$-linear $G_{\bbQ,S}$-representation, then its classical
Iwasawa cohomology over $G=G_{\bbQ,S},G_v$ ($v \in S$) is defined by
choosing a $G_{\bbQ,S}$-stable $\calO_E$-lattice $T \subset V$ and
forming $[\llim_n H^*(G \cap \Gal(\bbQ_S/\bbQ(\mu_{p^n})),T)]
\otimes_{\calO_E} E$; a variant of Shapiro's lemma identifies it with
$H^*(G,V \otimes_E \La_E(G_\infty)^\iota)$, and in particular it is
canonically independent of the choice of lattice $T$.  The natural map
\[
H^*(G,V \otimes_E \La_E(G_\infty)^\iota)
  \otimes_{\La_E(G_\infty)} \calH_E(G_\infty)
\to
H^*(G,V \otimes_E \calH_E(G_\infty)^\iota)
\]
is an isomorphism.  We define $\bfR\Ga_\Iw(G,V) = \bfR\Ga(G,V
\otimes_E \calH_E(G_\infty)^\iota)$ and $H^*_\Iw(G,V) = H^*(G,V
\otimes_E \calH_E(G_\infty)^\iota)$.  We refer to $H^*_\Iw(G,V)$ as
the \emph{rigid analytic Iwasawa cohomology}, or, because we have no
use for classical Iwasawa cohomology in this paper, simply the
\emph{Iwasawa cohomology}.  Iwasawa cohomology groups are coadmissible
$\calH_E(G_\infty)$-modules.

There is an equivalence of categories $V \mapsto \bbD_\rig(V)$ between
continuous $E$-linear $G_p$-representations and
$(\vphi,G_\infty)$-modules over $\calR_E = \calR \otimes_{\bbQ_p} E$,
where $\calR$ is the Robba ring.  Given any $(\vphi,G_\infty)$-module
$D$ over $\calR$, we define $\bfR\Ga_\Iw(G_p,D)$ to be the class of
\[
[D \xrightarrow{1-\psi} D]
\]
in the derived category, where $\psi$ is the canonical left inverse to
$\vphi$ and the complex is concentrated in degrees $1,2$, and we
define $H^*_\Iw(G_p,D)$ to be its cohomology, referring to the latter
as the \emph{Iwasawa cohomology} of $D$.  These Iwasawa cohomology
groups are also coadmissible $\calH_E(G_\infty)$-modules. Note the
comparison
\[
\bfR\Ga_\Iw(G_p,V) \cong \bfR\Ga_\Iw(G_p,\bbD_\rig(V)).
\]

We define $\wt\bbD_\cris(D) = D[1/t]^{G_\infty}$ and $\bbD_\cris(D) =
\wt\bbD_\cris(\Hom_{\calR_E}(D,\calR_E))$ (where $t \in \calR$ is
Fontaine's $2\pi i$), and we say that $D$ is crystalline if $\dim_E
\bbD_\cris(D) = \rank_{\calR_E} D$.  Note the comparisons
\[
\bbD_\cris(V) \cong \bbD_\cris(\bbD_\rig(V)),
\qquad
\wt\bbD_\cris(V) \cong \wt\bbD_\cris(\bbD_\rig(V)).
\]
The functor $\wt\bbD_\cris$ provides an exact, rank-preserving
equivalence of exact $\otimes$-categories with Harder--Narasimhan
filtrations, from crystalline $(\vphi,G_\infty)$-modules over
$\calR_E$ to filtered $\vphi$-modules over $E$, under which those
$(\vphi,G_\infty)$-modules of the form $\bbD_\rig(V)$ correspond to
the weakly admissible filtered $\vphi$-modules.  In particular, if we
tacitly equip any $E[\vphi]$-submodule of a filtered $\vphi$-module
with the induced filtration, then for $D$ crystalline $\wt\bbD_\cris$
induces a functorial, order-preserving bijection
\[
\{\text{$t$-saturated $(\vphi,G_\infty)$-submodules of }D\}
\leftrightarrow
\{\text{$E[\vphi]$-stable subspaces of }\wt\bbD_\cris(D)\}.
\]

In the remainder of this subsection, we assume given a continuous
$E$-representation $V$ of $G_{\bbQ,S}$ that is crystalline at $p$, as
well as a fixed $E[\vphi]$-stable $F \subseteq \bbD_\cris(V|_{G_p})$,
and we associate to these data an Iwasawa-theoretic Selmer complex.

We begin by defining a local condition for each $v \in S$, by which we
mean an object $U_v$ in the derived category together with a morphism
$i_v \cn U_v \to \bfR\Ga_\Iw(G_v,V)$.  If $v \neq p$, we denote by
$I_v \subset G_v$ the inertia subgroup, and we let $U_v =
\bfR\Ga_\Iw(G_v/I_v,V^{I_v})$ and $i_v$ be the inflation map.  If $v =
p$, we write $F^\perp \subseteq \wt\bbD_\cris(V)$ for the orthogonal
complement of $F$, and then $D^+_F := \wt\bbD_\cris^{-1}(F^\perp)
\subseteq \bbD_\rig(V)$ and $D^-_F = \bbD_\rig(V)/D^+_F$.  Then we let
$U_v = \bfR\Ga_\Iw(G_p,D^+_F)$, and we let $i_v$ be the functorial map
to $\bfR\Ga_\Iw(G_p,\bbD_\rig(V)) \cong \bfR\Ga_\Iw(G_p,V)$.

We now define the \emph{Selmer complex} $\bfR\wt\Ga_{F,\Iw}(\bbQ,V)$
to be the mapping fiber of the morphism
\[
\bfR\Ga_\Iw(G_{\bbQ,S},V)
\oplus
\bigoplus_{v \in S} U_v
\xrightarrow{\bigoplus_{v \in S} \res_v - \bigoplus_{v \in S} i_v}
\bigoplus_{v \in S} \bfR\Ga_\Iw(G_v,V),
\]
where $\res_v \cn \bfR\Ga_\Iw(G_{\bbQ,S},X) \to \bfR\Ga_\Iw(G_v,X)$
denotes restriction of cochains to the decomposition group.  We write
$\wt H^*_{F,\Iw}(\bbQ,V)$ for its cohomology, referring to it as the
\emph{extended Selmer groups}.  Then $\bfR\wt\Ga_{F,\Iw}(\bbQ,V)$ is a
perfect complex of $\calH_E(G_\infty)$-modules for the range $[0,3]$.

We will have need for a version without imposing local conditions at
$p$.  Namely, we write $\bfR\wt\Ga_{(p),\Iw}(\bbQ,V)$ for the mapping
fiber of
\[
\bfR\Ga_\Iw(G_{\bbQ,S},V)
\oplus
\bigoplus_{v \in S^{(p)}} U_{v,\Iw}
\xrightarrow{\bigoplus_{v \in S^{(p)}} \res_v
  - \bigoplus_{v \in S^{(p)}} i_v}
\bigoplus_{v \in S^{(p)}} \bfR\Ga_\Iw(G_v,V),
\]
where $S^{(p)} = S \bs \{p\}$, and we write $\wt
H^*_{(p),\Iw}(\bbQ,V)$ for its cohomology.  Bearing in mind the exact
triangle
\[
\bfR\Ga_\Iw(G_p,D^+_F) \to \bfR\Ga_\Iw(G_p,V) \to
\bfR\Ga_\Iw(G_p,D^-_F) \to \bfR\Ga_\Iw(G_p,D^+_F)[1],
\]
we deduce from the definitions of the Selmer complexes an exact
triangle
\begin{equation}\label{E:remove-p}
\bfR\wt\Ga_{F,\Iw}(\bbQ,V) \to \bfR\wt\Ga_{(p),\Iw}(\bbQ,V) \to
\bfR\Ga_\Iw(G_p,D^-_F) \to \bfR\wt\Ga_{F,\Iw}(\bbQ,V)[1].
\end{equation}

\subsection{The Main Conjecture for $f$ and its symmetric powers}

We remind the reader of the fixed newform $f$ of weight $k$, level
$\Ga_1(N)$ with $p \nmid N$ and character $\ep$, with CM by $K$, and
the roots $\alpha,\ov\alpha$ of $x^2 + \ep(p)p^{k-1}$.

Since the elements $\alpha,\ov\alpha$ are distinct, the
$\vphi$-eigenspace with eigenvalue $\alpha$ determines an
$E[\vphi$]-stable subspace $F_\alpha \subseteq \bbD_\cris(V_f)$.  We
apply the constructions of Iwasawa-theoretic extended Selmer groups,
with their associated ranks and characteristic ideals, to the data of
$V_f$ equipped with $F_\alpha$.

The following is the ``finite-slope'' form of the Main Conjecture of
Iwasawa theory for $f$.

\begin{theorem}\label{t:MC}
Assume that $p$ does not divide the order of the nebentypus $\ep$.
The coadmissible $\calH_E(G_\infty)$-module $\wt
H^2_{F_\alpha,\Iw}(\bbQ,V_f)$ is torsion, and
\[
\chr_{\calH_E(G_\infty)} \wt H^2_{F_\alpha,\Iw}(\bbQ,V_f)
=
(\Tw_{-1} L_{f,\alpha}).
\]
\end{theorem}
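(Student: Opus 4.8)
The plan is to deduce Theorem~\ref{t:MC} from the equality in Rubin's Main Conjecture for $K$ proved in Section~2, by combining $V_f \cong \Ind_K^\bbQ V_\psi$ with the inertness of $p$ in $K$ and using the triangle~\eqref{E:remove-p} to strip off the local condition at $p$. Since $p$ is unramified in $K$ one has $K \cap k_\infty = \bbQ$, so $\Gal(Kk_\infty/K) \cong G_\infty$ and the Iwasawa algebra is unchanged; since $p$ is inert there is a unique prime $\fkp \mid p$ of $K$, with $K_\fkp = K_p$. Shapiro's lemma identifies $\bfR\Ga_\Iw(G_{\bbQ,S},V_f)$ with the cyclotomic Iwasawa cohomology of $V_\psi$ over $K$ and carries each $\res_v - i_v$ with $v \mid N$ to the ``unramified'' local condition at the primes of $K$ above $v$; hence $\bfR\wt\Ga_{(p),\Iw}(\bbQ,V_f)$ is the Iwasawa-theoretic Selmer complex over $K$ attached to $V_\psi$ with no condition at $\fkp$. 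All of its cohomology groups are base changes to $\calH_E(G_\infty)$ of finitely generated torsion $\La_E(G_\infty)$-modules coming from the classical $\psi$-twisted Iwasawa modules of $K$; the Euler system of elliptic units (equivalently, via the CM structure, Kato's system for $V_f$, as used in \cite{HL}) supplies a canonical class $\mathbf z \in \wt H^1_{(p),\Iw}(\bbQ,V_f)$, and this group is torsion-free. The essential input is that Section~2 upgrades Rubin's divisibility \cite{R,R2} to the unconditional equality
\[
\chr_{\calH_E(G_\infty)} \wt H^2_{(p),\Iw}(\bbQ,V_f) = \chr_{\calH_E(G_\infty)}\bigl( \wt H^1_{(p),\Iw}(\bbQ,V_f) \big/ \mathbf z\cdot\calH_E(G_\infty) \bigr) = (L_\psi),
\]
where $L_\psi$ is the cyclotomic $p$-adic $L$-function of $\psi$.

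\textbf{The local term at $p$ and the reciprocity law.} The space $F_\alpha \subseteq \bbD_\cris(V_f|_{G_p})$ is the $\alpha$-eigenspace of $\vphi$, and $\bbD_\cris(V_f|_{G_p}) \cong \bbD_\cris^{K_p}(V_\psi)$ is $2$-dimensional over $E$ with $\vphi$ acting via eigenvalues $\alpha,\ov\alpha = -\alpha$ (recall $\alpha^2 = \psi(p)$); so $D^-_{F_\alpha}$ is a rank-one crystalline $(\vphi,G_\infty)$-module over $\calR_E$ whose Frobenius eigenvalue has valuation $(k-1)/2$, strictly between its two associated Hodge--Tate weights. By the theory of Iwasawa cohomology of crystalline $(\vphi,\Ga)$-modules (cf.\ \cite{P}), $H^2_\Iw(G_p,D^-_{F_\alpha}) = 0$, while $H^1_\Iw(G_p,D^-_{F_\alpha})$ is free of rank one over $\calH_E(G_\infty)$ and carries a Perrin--Riou regulator $\calL_{F_\alpha}$ with explicitly computed image; the gap between the Frobenius slope and a Hodge--Tate weight makes that image an ``unbounded'' $\calH_E(G_\infty)$-submodule, which is exactly why $L_{f,\alpha}$ lies in $\calH_{(k-1)/2,E}(G_\infty)$ rather than in $\La_E(G_\infty)$. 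The explicit reciprocity law --- Rubin's, for elliptic units, which is precisely what underlies the construction of $L_{f,\alpha}$ in \cite{HL} --- then computes the image of $\mathbf z$ under $\res_p$, the projection $\bbD_\rig(V_f|_{G_p}) \twoheadrightarrow D^-_{F_\alpha}$, and $\calL_{F_\alpha}$; the upshot, with $\partial$ denoting the connecting map in~\eqref{E:remove-p}, is that $\chr_{\calH_E(G_\infty)}\bigl( H^1_\Iw(G_p,D^-_{F_\alpha}) \big/ \partial(\mathbf z)\cdot\calH_E(G_\infty) \bigr) = (\Tw_{-1}L_{f,\alpha})$.

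\textbf{Assembly.} Taking cohomology in~\eqref{E:remove-p} and using $H^2_\Iw(G_p,D^-_{F_\alpha}) = 0$, one obtains a surjection $\wt H^2_{F_\alpha,\Iw}(\bbQ,V_f) \twoheadrightarrow \wt H^2_{(p),\Iw}(\bbQ,V_f)$ whose kernel is $\coker\partial = H^1_\Iw(G_p,D^-_{F_\alpha}) \big/ \partial(\wt H^1_{(p),\Iw}(\bbQ,V_f))$. Since $\partial(\mathbf z) \neq 0$, the map $\partial$ has torsion cokernel, so $\wt H^2_{F_\alpha,\Iw}(\bbQ,V_f)$ is torsion and $\chr \wt H^2_{F_\alpha,\Iw}(\bbQ,V_f) = \chr(\coker\partial) \cdot \chr \wt H^2_{(p),\Iw}(\bbQ,V_f)$; moreover $\partial$ is injective because its source is torsion-free, so $\wt H^1_{F_\alpha,\Iw}(\bbQ,V_f) = \ker\partial = 0$. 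Applying multiplicativity of characteristic ideals along $\partial(\mathbf z)\calH_E(G_\infty) \subseteq \partial(\wt H^1_{(p),\Iw}(\bbQ,V_f)) \subseteq H^1_\Iw(G_p,D^-_{F_\alpha})$, together with the last two steps, gives $\chr(\coker\partial) = (\Tw_{-1}L_{f,\alpha}) \cdot (L_\psi)^{-1}$, and therefore
\[
\chr_{\calH_E(G_\infty)} \wt H^2_{F_\alpha,\Iw}(\bbQ,V_f) = (\Tw_{-1}L_{f,\alpha}) \cdot (L_\psi)^{-1} \cdot (L_\psi) = (\Tw_{-1}L_{f,\alpha}).
\]
Throughout, the hypothesis $p \nmid \ord(\ep)$ is what permits one to argue component-by-component over the characters of $\Delta$ and to apply freely the twists $\Tw_j$ relating $V_f$ and $V_f^*(1)$.

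\textbf{Main obstacle.} The decisive ingredient is the equality in the descent step: Rubin's Euler-system method \cite{R,R2} yields only one divisibility at inert primes, and removing the hypotheses and upgrading it to an identity, unconditionally, is exactly the work of Section~2 (via a careful analysis of the local Iwasawa theory at the inert prime). The second, more technical difficulty is bookkeeping: one must follow the unbounded Perrin--Riou/Coleman factor through the long exact cohomology sequence of~\eqref{E:remove-p} precisely enough to obtain an honest equality of ideals in the non-Noetherian ring $\calH_E(G_\infty)$ --- matching the interpolation property of $L_\psi$ against the ``$L$-function part'' of $\Tw_{-1}L_{f,\alpha}$ and confirming that no bounded ambiguity survives.
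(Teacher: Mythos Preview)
Your strategy coincides with the paper's: use the triangle~\eqref{E:remove-p}, combine the Rubin/Kato equality for the Selmer complex with no local condition at $p$ with the explicit reciprocity law computing the image of the zeta element in $H^1_\Iw(G_p,D^-_{F_\alpha})$, and assemble via multiplicativity of characteristic ideals. The paper organizes the cancellation slightly differently---it cancels $\chr(\wt H^1_{(p),\Iw}/\mathbf z)$ directly against $\chr\wt H^2_{(p),\Iw}$ without naming their common value, and it routes that equality through Kato's \cite[Theorem~12.5(3), Proposition~15.17]{K} rather than through a direct Shapiro-lemma descent to $K$---but these are presentational differences.

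There is, however, a real gap in your local analysis at $p$. The object $D^-_{F_\alpha}$ has rank one, hence a \emph{single} Hodge--Tate weight (namely $0$ in the paper's conventions, with $\vphi$-eigenvalue $\alpha^{-1}$ on $\wt\bbD_\cris$); and your claim that $H^1_\Iw(G_p,D^-_{F_\alpha})$ is free of rank one is false in the exceptional-zero case $k$ odd, $\ep(p)=-1$, $\alpha=+p^{(k-1)/2}$, where its torsion is $E(\chi^{(k-1)/2})$. Consequently the identity $\chr\bigl(H^1_\Iw(G_p,D^-_{F_\alpha})/\partial(\mathbf z)\bigr)=(\Tw_{-1}L_{f,\alpha})$ does not follow from the reciprocity law and freeness alone. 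The paper supplies the missing step via \cite[Theorem~3.21]{Nak}: since $D^-_{F_\alpha}$ has Hodge--Tate weight zero and $H^2_\Iw(G_p,D^-_{F_\alpha})=0$, one has $\chr H^1_\Iw(G_p,D^-_{F_\alpha})_\tors = \chr\coker\Exp^*_{D^{-,*}_{F_\alpha}}$, and these two contributions cancel in the four-term exact sequence relating $H^1_\Iw(G_p,D^-_{F_\alpha})/\partial(\mathbf z)$ to $\bigl(\wt\bbD_\cris(D^-_{F_\alpha})\otimes_E\calH_E(G_\infty)\bigr)\big/\Exp^*_{D^{-,*}_{F_\alpha}}\partial(\mathbf z)$. (As a minor aside, the hypothesis $p\nmid\ord(\ep)$ is invoked in the paper to place oneself in the setting of Theorem~\ref{T:rubin} via Kato's argument, not for the $\Delta$-decomposition or twisting reasons you cite.)
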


\begin{proof}
We reproduce the argument of \cite[\S5]{P2}, adapted to the
normalizations of this paper.

In the notation of \S\ref{S:selmer}, the object $D^-_{F_\alpha}$ is
crystalline, and $\wt\bbD_\cris(D^-_{F_\alpha})$ has
$\vphi$-eigenvalue $\alpha\inv$ and Hodge--Tate weight $0$.  This
implies that $H^2_\Iw(G_p,D^-_{F_\alpha}) = 0$.  (If $k$ is odd,
$\ep(p)=-1$, and $\alpha=+p^{(k-1)/2}$ then
$H^1_\Iw(G_p,D^-_{F_\alpha})_\tors \cong E(\chi^{(k-1)/2})$ is
nonzero, but this ``exceptional zero'' does not affect the present
proof.)

Write $f^c = f \otimes \ep\inv$ for the eigenform with Fourier
coefficients complex conjugate to those of $f$, and recall the duality
$\Hom_E(V_{f^c},E) \cong V_f(1-k)$.  Let $z'_{f^c} \in \wt
H^1_{(p),\Iw}(\bbQ,\Hom_E(V_{f^c},E))$ denote Kato's zeta element
derived from elliptic units (denoted $z_\ga^{(p)}(f^*)$ for suitable
$\ga \in \Hom_E(V_{f^c},E)$ in \cite{K}), and let
\[
z_f = \Tw_{k-1} z'_{f^c} \in
\wt H^1_{(p),\Iw}(\bbQ,\Hom_E(V_{f^c},E)(k-1)) \cong
\wt H^1_{(p),\Iw}(\bbQ,V_f).
\]
For a crystalline $(\vphi,G_\infty)$-module $D$ satisfying
$\Fil^1\bbD_\dR(D) = 0$, recall the dual of the big exponential map
treated in \cite[\S3]{Nak}:
\[
\Exp^*_{D^*}
\cn
H^1_\Iw(G_p,D)
\to
\wt\bbD_\cris(D) \otimes_E \calH_E(G_\infty).
\]
By naturality in $D$, there is a commutative diagram
\[\begin{array}{r@{\ }c@{\ }c@{\ }c}
\wt H^1_{(p),\Iw}(\bbQ,V_f)
  \xrightarrow{\loc_{V_f}} H^1_\Iw(G_p,V_f)
  \cong
& H^1_\Iw(G_p,\bbD_\rig(V_f))
& \xrightarrow{\Col_\alpha} &
  H^1_\Iw(G_p,D^-_{F_\alpha}) \\
& \Exp^*_{V_f^*}\downarrow\phantom{\Exp^*_{V_f^*}} &
& \phantom{\Exp^*_{D^{-,*}_{F_\alpha}}} \downarrow \Exp^*_{D^{-,*}_{F_\alpha}} \\
& \wt\bbD_\cris(V_f) \otimes_E \calH_E(G_\infty)
& \to &
  \wt\bbD_\cris(D^-_{F_\alpha}) \otimes_E \calH_E(G_\infty).
\end{array}\]
Write $\loc_\alpha = \Col_\alpha \circ \loc_{V_f}$, where the maps
$\loc_{V_f}$ and $\Col_\alpha$ are as in the preceding diagram.
Identifying $\wt\bbD_\cris(D_{F_\alpha}^-) = \Hom_E(Ee_\alpha,E)$,
\cite[Theorem~16.6(2)]{K} shows that
\begin{equation}\label{E:compute-kato}
(\Tw_1 \Exp^*_{D^{-,*}_{F_\alpha}} \loc_\alpha z_f)(e_\alpha)
=
(\Exp^*_{\Hom_E(V_f,E)} \loc_{V_f(1)} \Tw_1 z_f)(e_\alpha)
=
L_{f,\alpha},
\end{equation}
after perhaps rescaling $e_\alpha$.  In particular, $\loc_\alpha$ is a
nontorsion morphism.

The exact triangle \eqref{E:remove-p} gives rise to an exact sequence
\begin{multline*}
0
\to
\wt H^1_{F_\alpha,\Iw}(\bbQ,V_f)
\to
\wt H^1_{(p),\Iw}(\bbQ,V_f)
\xrightarrow{\loc_\alpha}
H^1_\Iw(G_p,D^-_{F_\alpha}) \\
\to
\wt H^2_{F_\alpha,\Iw}(\bbQ,V_f)
\to
\wt H^2_{(p),\Iw}(\bbQ,V_f)
\to
0.
\end{multline*}
It follows from \cite[Theorem~12.4]{K} that the finitely generated
$\calH_E(G_\infty)$-module $\wt H^1_{(p),\Iw}(\bbQ,V_f)$ (resp.\ $\wt
H^2_{(p),\Iw}(\bbQ,V_f)$) is free of rank $1$ (resp.\ is torsion).
Employing the local Euler--Poincar\'e formula and the fact that
$\loc_\alpha$ is nontorsion, we see from the preceding exact sequence
that $\wt H^1_{F_\alpha,\Iw}(\bbQ,V_f) = 0$, $\wt
H^2_{F_\alpha,\Iw}(\bbQ,V_f)$ is torsion, and
\begin{multline*}
\left(\chr_{\calH_E(G_\infty)}
  \frac{\wt H^1_{(p),\Iw}(\bbQ,V_f)}{\calH_E(G_\infty)z_f}\right)
\left(\chr_{\calH_E(G_\infty)} \wt H^2_{F_\alpha,\Iw}(\bbQ,V_f)\right) \\
=
\left(\chr_{\calH_E(G_\infty)}
  \frac{H^1_\Iw(G_p,D^-_{F_\alpha})}{\calH_E(G_\infty)\loc_\alpha z_f}\right)
\left(\chr_{\calH_E(G_\infty)} \wt H^2_{(p),\Iw}(\bbQ,V_f)\right).
\end{multline*}
Applying $\Tw_{k-1}$ to the claim of \cite[Theorem~12.5(3)]{K} with
$f^*$ in place of $f$, we deduce that
\[
\chr_{\calH_E(G_\infty)}
  \frac{\wt H^1_{(p),\Iw}(\bbQ,V_f)}{\calH_E(G_\infty)z_f}
=
\chr_{\calH_E(G_\infty)} \wt H^2_{(p),\Iw}(\bbQ,V_f),
\]
and therefore
\[
\chr_{\calH_E(G_\infty)} \wt H^2_{F_\alpha,\Iw}(\bbQ,V_f)
=
\chr_{\calH_E(G_\infty)}
  \frac{H^1_\Iw(G_p,D^-_{F_\alpha})}{\calH_E(G_\infty)\loc_\alpha z_f}.
\]
Although only a divisibility of characteristic ideals is claimed by
Kato, one easily checks that his proof, especially
\cite[Proposition~15.17]{K}, gives an equality whenever Rubin's method
gives an equality.  Under the hypothesis that $\ep$ has order prime to
$p$, the required extension of Rubin's work is precisely
Theorem~\ref{T:rubin} below.  It remains to compute the right hand
side of the last identity.  In fact, one has the exact sequence
\begin{multline*}
0 \to
H^1_\Iw(G_p,D^-_{F_\alpha})_\tors
\to
\frac{H^1_\Iw(G_p,D^-_{F_\alpha})}{\calH_E(G_\infty)\loc_\alpha z_f} \\
\xrightarrow{\Exp^*_{D^{-,*}_{F_\alpha}}}
\frac{\wt\bbD_\cris(D^-_{F_\alpha}) \otimes_E \calH_E(G_\infty)}
  {\calH_E(G_\infty)\Exp^*_{D^{-,*}_{F_\alpha}}\loc_\alpha z_f}
\to
\coker \Exp^*_{D^{-,*}_{F_\alpha}}
\to 0,
\end{multline*}
and because $D^-_{F_\alpha}$ has Hodge--Tate weight zero and
$H^2_\Iw(G_p,D^-_{F_\alpha})=0$, \cite[Theorem~3.21]{Nak} shows that
\[
\chr_{\calH_E(G_\infty)} H^1_\Iw(G_p,D^-_{F_\alpha})_\tors
=
\chr_{\calH_E(G_\infty)} \coker \Exp^*_{D^{-,*}_{F_\alpha}},
\]
and hence
\[
\chr_{\calH_E(G_\infty)}
  \frac{H^1_\Iw(G_p,D^-_{F_\alpha})}{\calH_E(G_\infty)\loc_\alpha z_f}
=
\chr_{\calH_E(G_\infty)}
  \frac{\wt\bbD_\cris(D^-_{F_\alpha}) \otimes_E \calH_E(G_\infty)}
    {\calH_E(G_\infty)\Exp^*_{D^{-,*}_{F_\alpha}}\loc_\alpha z_f}.
\]
Finally, \eqref{E:compute-kato} shows that the right hand side above
is generated by $\Tw_{-1} L_{f,\alpha}$.
\end{proof}

We now turn to the Main Conjecture of Iwasawa theory for $V_m$ in its
``finite-slope'' form, beginning with two remarks.  First, we remind
the reader that since $\Sel_{k_\infty}(A_{\ep_K^r}^*)^\vee$ is a
finitely generated, torsion $\La_{\calO_E}(G_\infty)$-module, it
follows that
\[
\Sel_{k_\infty}(A_{\ep_K^r}^*)^\vee[1/p]
=
\Sel_{k_\infty}(A_{\ep_K^r}^*)^\vee
  \otimes_{\La_{\calO_E}(G_\infty)} \La_E(G_\infty)
\stackrel\sim\to
\Sel_{k_\infty}(A_{\ep_K^r}^*)^\vee
  \otimes_{\La_{\calO_E}(G_\infty)} \calH_E(G_\infty),
\]
and therefore $\Sel_{k_\infty}(A_{\ep_K^r}^*)^\vee[1/p]$ is naturally
a finitely generated (hence coadmissible), torsion
$\calH_E(G_\infty)$-module.  Second, for $\fks \in \fkS$ we note that
the ``plus and minus'' Iwasawa-theoretic Selmer groups satisfy the
arithmetic duality
\[
H^{1,\fks_i}_f(k_\infty,A_{f_i}^*((r-i)(k-1)))^\vee[1/p]
\cong
\wt H^2_{\fks_i,\Iw}(\bbQ,V_{f_i}((i-r)(k-1)))^\iota,
\]
where $\wt H^2_{\fks_i,\Iw}$ denotes the cohomology of an
Iwasawa-theoretic Selmer complex with local condition at $p$
appropriately built from the choice $\fks_i$.  These isomorphic
modules are also finitely generated (hence coadmissible), torsion
$\calH_E(G_\infty)$-modules, by Theorem~5.6.

With the preceding remarks in mind, what follows is the finite-slope
analogue of Definition~5.3.  Fix $\fkt = (\fkt_0,\ldots,\fkt_{\wt
  r-1}) \in \fkT$.  For each $i=0,\ldots,\wt r-1$, the elements
$\alpha_i,\ov\alpha_i$ are distinct, so the $\vphi$-eigenspace with
eigenvalue $\fkt_i p^{(r-i)(k-1)}$ determines an $E[\vphi]$-stable
subspace $F_i \subseteq \bbD_\cris(V_{f_i}((i-r)(k-1)))$.  We may
apply the constructions of Iwasawa-theoretic extended Selmer groups,
with their associated ranks and characteristic ideals, to the data of
$V_{f_i}((i-r)(k-1))$ equipped with $F_i$.

\begin{definition}\label{d:Selmer}
For $\fkt \in \fkT$, we define the coadmissible
$\calH_E(G_\infty)$-module
\[
\Sel_{k_\infty}^\fkt(V_m^*)^\vee
:=
\left(\bigoplus_{i=0}^{\wt r-1}
  \wt H^2_{F_i,\Iw}\left(\bbQ,V_{f_i}((i-r)(k-1))\right)^\iota\right)
\oplus
\begin{cases}
\Sel_{k_\infty}(A_{\ep_K^r}^*)^\vee[1/p] & m\text{ even}, \\
0 & m\text{ odd}.
\end{cases}
\]
\end{definition}

\begin{remark}
Although the notation $\Sel_{k_\infty}^\fkt(V_m^*)^\vee$ in the
finite-slope case was chosen for symmetry with
$\Sel_{k_\infty}^\fks(A_m^*)^\vee[1/p]$ in the ``plus and minus''
case, this notation is highly misleading: it is an essential feature
of the finite-slope theory that $\Sel_{k_\infty}^\fkt(V_m^*)^\vee$ is
coadmissible but typically \emph{not} finitely generated over
$\calH_E(G_\infty)$, and therefore does not arise as the Pontryagin
dual (with $p$ inverted) of direct limits of finite-layer objects, as
$\Sel_{k_\infty}^\fks(A_m^*)^\vee[1/p]$ does.  This fact forces us to
work on the other side of arithmetic duality, as in the first summand
above.
\end{remark}

\begin{theorem}\label{t:MC2}
For all $\fkt \in \fkT$, the coadmissible $\calH_E(G_\infty)$-module
$\wt H^2_{\fkt,\Iw}(\bbQ,V_m)$ is torsion, and
\[
\chr_{\calH_E(G_\infty)} \Sel_{k_\infty}^\fkt(V_m^*)^\vee
=
(\Tw_1 \wt L_{V_m,\fkt}).
\]
\end{theorem}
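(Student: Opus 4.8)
The plan is to reduce the statement, summand by summand, to the corresponding facts already established. The module $\Sel_{k_\infty}^\fkt(V_m^*)^\vee$ is by Definition~\ref{d:Selmer} a direct sum over $i=0,\ldots,\wt r-1$ of the twisted extended Selmer groups $\wt H^2_{F_i,\Iw}(\bbQ,V_{f_i}((i-r)(k-1)))^\iota$, together with (when $m$ is even) the extra summand $\Sel_{k_\infty}(A_{\ep_K^r}^*)^\vee[1/p]$. Since the characteristic ideal of a direct sum of torsion coadmissible modules is the product of the characteristic ideals, and since applying $\iota$ or a twist $\Tw_n$ is an automorphism of $\calH_E(G_\infty)$ that carries characteristic ideals to their images, it suffices to compute the characteristic ideal of each factor and then multiply. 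For the torsion statement, one notes that $\wt H^2_{\fkt,\Iw}(\bbQ,V_m)$ decomposes compatibly with the decomposition $V_m \cong \bigoplus (V_{f_i}\otimes\chi^{(i-r)(k-1)}) \oplus \ep_K^r$ from \S7.1 (the Selmer complex construction being additive in the pair $(V,F)$), so its torsionness follows from that of each $\wt H^2_{F_i,\Iw}$ and of the trivial-coefficient piece.

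First I would treat the symmetric-power summands. For each $i$, the choice of $\fkt_i \in \{\alpha_i,\ov\alpha_i\}$ determines $\fkt_i p^{(r-i)(k-1)}$ (when $m$ is even this is just $\fkt_i$; when $m$ is odd, $\fkt_i = \pm\alpha p^{(r-i)(k-1)}$ so $\fkt_i p^{(r-i)(k-1)}$ is a $\vphi$-eigenvalue on $V_{f_i}$), hence an $E[\vphi]$-stable line $F_i \subseteq \bbD_\cris(V_{f_i}((i-r)(k-1)))$ exactly as in the paragraph before Definition~\ref{d:Selmer}. This is the twist by $\chi^{(i-r)(k-1)}$ of the situation of Theorem~\ref{t:MC} applied to the CM newform $f_i$ in place of $f$ (with root $\fkt_i$ in place of $\alpha$); the hypothesis that $p\nmid\ord(\ep_i)$ needed to invoke Theorem~\ref{t:MC} will follow from $p\nmid\ord(\ep)$ via the explicit description of $\ep_i$ from \S7.1 (Proposition~3.4). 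Thus Theorem~\ref{t:MC} gives
\[
\chr_{\calH_E(G_\infty)} \wt H^2_{F_{i,\fkt_i},\Iw}(\bbQ,V_{f_i})
= (\Tw_{-1} L_{f_i,\fkt_i}),
\]
and after applying $\Tw_{(i-r)(k-1)}$ to pass to the twisted representation and then $\iota$, this summand contributes $\iota\bigl(\Tw_{(i-r)(k-1)-1} L_{f_i,\fkt_i}\bigr) = \Tw_1 \iota\bigl(\Tw_{(r-i)(k-1)} L_{f_i,\fkt_i}\bigr)$, using $\iota\Tw_n = \Tw_{-n}\iota$. Multiplying over $i$ reproduces exactly the factor $\Tw_1\,\iota\bigl(\prod_i \Tw_{(r-i)(k-1)} L_{f_i,\fkt_i}\bigr)$ appearing in the definition of $\wt L_{V_m,\fkt}$.

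Next, for $m$ even I would handle the summand $\Sel_{k_\infty}(A_{\ep_K^r}^*)^\vee[1/p]$. By the last paragraph of \S7.1 this module, before inverting $p$, is finitely generated and torsion over $\La_{\calO_E}(G_\infty)$; inverting $p$ and then base-changing to $\calH_E(G_\infty)$ is, by the discussion of finitely generated torsion modules in \S\ref{S:selmer}, harmless on characteristic ideals, so $\chr_{\calH_E(G_\infty)} \Sel_{k_\infty}(A_{\ep_K^r}^*)^\vee[1/p]$ is the extension of $\chr_{\La_{\calO_E}(G_\infty)} \Sel_{k_\infty}(A_{\ep_K^r}^*)^\vee$. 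Here is where the main obstacle lies: identifying this characteristic ideal with $(\wt L_{\ep_K^r})$. The containment of the $p$-adic $L$-function in the characteristic ideal (one divisibility) is Rubin's theorem \cite{R,R2}; promoting it to an equality is precisely the content of Theorem~\ref{T:rubin} of the next section, which upgrades Rubin's divisibility at inert primes to an equality unconditionally. Granting that, $\chr_{\calH_E(G_\infty)} \Sel_{k_\infty}(A_{\ep_K^r}^*)^\vee[1/p] = (\wt L_{\ep_K^r})$ up to the regularization bookkeeping of \S5.2 (when $\ep_K^r = {\bf1}$, i.e.\ $r$ even, one passes to the regularized $\wt L_{\ep_K^r}$ by removing the poles, exactly as in Definition~5.3). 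For $m$ odd the extra summand is $0$ and this step is vacuous, matching the "$1$" in the definition of $\wt L_{V_m,\fkt}$. Assembling the two cases, the product of all the factors is $\Tw_1 \wt L_{V_m,\fkt}$, which completes the proof.
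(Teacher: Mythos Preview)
Your decomposition-and-reduction strategy matches the paper's proof exactly: split $\Sel_{k_\infty}^\fkt(V_m^*)^\vee$ via Definition~\ref{d:Selmer}, apply Theorem~\ref{t:MC} to each $f_i$, and handle the Dirichlet-character summand separately. There is, however, a genuine misattribution in your treatment of the summand $\Sel_{k_\infty}(A_{\ep_K^r}^*)^\vee[1/p]$ when $m$ is even. The character $\ep_K^r$ is a Dirichlet character of $G_\bbQ$ (trivial or the quadratic character of $K$), and this Selmer group lives over the cyclotomic tower $k_\infty/\bbQ$; the equality of its characteristic ideal with the appropriate twist of $\wt L_{\ep_K^r}$ is the Iwasawa Main Conjecture over $\bbQ$ for Dirichlet characters (Mazur--Wiles), recorded in the paper as Theorem~5.5. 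Theorem~\ref{T:rubin}, by contrast, concerns class groups and elliptic units over $\bbZ_p$-extensions of an \emph{imaginary quadratic} field, and it enters only \emph{inside} the proof of Theorem~\ref{t:MC}, where Kato's Euler-system argument for each $f_i$ requires Rubin's divisibility upgraded to an equality. You have swapped the roles of the two inputs: Theorem~\ref{T:rubin} feeds into the $f_i$-summands via Theorem~\ref{t:MC}, while the (already classical, unconditional) cyclotomic Main Conjecture handles the $\ep_K^r$-summand.

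A smaller point: your displayed identity
\[
\iota\bigl(\Tw_{(i-r)(k-1)-1} L_{f_i,\fkt_i}\bigr)
= \Tw_1\,\iota\bigl(\Tw_{(r-i)(k-1)} L_{f_i,\fkt_i}\bigr)
\]
is false as written; applying $\iota\Tw_n=\Tw_{-n}\iota$ to each side shows they differ by $\Tw_{2(r-i)(k-1)}$. The slip occurs one step earlier, in the sign of the twist applied to the characteristic ideal when passing from $V_{f_i}$ to $V_{f_i}((i-r)(k-1))$. With the correct sign there, the computation does land on the $i$th factor of $\Tw_1\wt L_{V_m,\fkt}$, so your conclusion is right even though the intermediate bookkeeping needs repair.
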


\begin{proof}
Just as in the proof of Theorem~5.9, this theorem follows from
Theorem~5.5 and from Theorem~\ref{t:MC} applied to each $f_i$.
\end{proof}

\section{The Main Conjecture for imaginary quadratic fields at inert
  primes}

In the fundamental works \cite{R,R2}, Rubin perfected the Euler system
method for elliptic units.  From this he deduced a divisibility of
characteristic ideals as in the Main Conjecture of Iwasawa theory.  In
most cases, he used the analytic class number formula to promote the
divisibilities to identities.  In this section we extend the use of
the analytic class number formula to the remaining cases.  The
obstruction in these problematic cases is that the control maps on
global/elliptic units and class groups are far from being
isomorphisms.  Our approach is to use base change of Selmer complexes
to get a precise description of the failure of control, and then to
apply a characterization of $\mu$- and $\la$-invariants that is valid
even in the presence of zeroes of the characteristic ideal at
finite-order points.  This section is written independently of the
preceding notations and hypotheses of this paper and \cite{HL}; we
employ notations as in \cite{R}, recalled as follows.

We take $K$ to be an imaginary quadratic field, and $p$ an odd prime
inert in $K$.  Let $K_0$ be a finite abelian extension with $\Delta =
\Gal(K_0/K)$ and $\de = [K_0:K]$, and assume that $p \nmid \de$.  Let
$K_\infty$ be an abelian extension of $K$ containing $K_0$, such that
$\Ga = \Gal(K_\infty/K_0)$ is isomorphic to $\bbZ_p$ or $\bbZ_p^2$.
One has $\scrG = \Gal(K_\infty/K) = \Delta \times \Ga$.  Accordingly,
$K_\infty = K_0 \cdot K_\infty^\Delta$, where
$\Gal(K_\infty^\Delta/K)$ is identified with $\Ga$.

We write $\La = \La(\scrG) =\bbZ_p[\![\scrG]\!]$.  The letter $\eta$
will always range over the irreducible $\bbZ_p$-representations of
$\Delta$.  One has $\bbZ_p[\Delta] = \bigoplus_\eta
\bbZ_p[\Delta]^\eta$, where $\bbZ_p[\Delta]^\eta$ is isomorphic to the
ring of integers in the unramified extension of $\bbQ_p$ of degree
$\dim(\eta)$, and, accordingly, $\La = \bigoplus_\eta
\bbZ_p[\Delta]^\eta[\![\Ga]\!]$.  The sum map $\summ \cn
\bbZ_p[\Delta] \to \bbZ_p$, $\sum_\sigma n_\sigma\sigma \mapsto
\sum_\sigma n_\sigma$, is identified with the projection onto the
component $\bbZ_p[\Delta]^{\bf1}$ indexed by the trivial character
${\bf1}$; write $\bbZ_p[\Delta]^!$ for the kernel of the sum map,
which is equal to $\bigoplus_{\eta\neq{\bf1}} \bbZ_p[\Delta]^\eta$,
and satisfies $\bbZ_p[\Delta] = \bbZ_p[\Delta]^{\bf1} \oplus
\bbZ_p[\Delta]^!$.

For $\{a_n\}$ a sequence of positive real numbers, if there exist real
numbers $\mu,\la$ such that $\log_p a_n = \mu p^n + \la n + O(1)$ as
$n \to +\infty$, then these numbers $\mu,\la$ are uniquely determined
by $\{a_n\}$, and we write $\mu = \mu(\{a_n\})$ and $\la =
\la(\{a_n\})$.

\begin{lemma}\label{L:numerics}
Assume that $\Ga$ is isomorphic to $\bbZ_p$, and let $M$ be a finitely
generated, torsion $\bbZ_p[\![\Ga]\!]$-module.  Then for $n \gg 0$ the
quantity $\rank_{\bbZ_p} M_{\Ga^{p^n}}$ stabilizes to some integer $r
\geq 0$, so that $M_{\Ga^{p^n}} \approx \bbZ_p^{\oplus r} \oplus
M_{\Ga^{p^n}}[p^\infty]$, and Iwasawa's $\mu$- and $\la$-invariants of
$M$ satisfy $\mu(M) = \mu(\{\#M_{\Ga^{p^n}}[p^\infty]\})$ and $\la(M)
= r + \la(\{\#M_{\Ga^{p^n}}[p^\infty]\})$.
\end{lemma}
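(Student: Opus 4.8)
The plan is to reduce to the structure theorem for finitely generated torsion $\bbZ_p[\![\Ga]\!]$-modules and then track how each elementary building block behaves under the coinvariants functors $M \mapsto M_{\Ga^{p^n}}$. Since all the quantities in the statement ($\rank_{\bbZ_p}$, size of $p$-torsion, $\mu$, $\la$) are additive in short exact sequences up to bounded error, and since the pseudo-isomorphism appearing in the structure theorem has finite kernel and cokernel, I would first argue that it suffices to prove the assertion for $M$ one of the elementary modules $\bbZ_p[\![\Ga]\!]/(p^{a})$ or $\bbZ_p[\![\Ga]\!]/(g^{b})$ with $g$ a distinguished irreducible polynomial; here one must note that passing to a pseudo-isomorphic module changes each $M_{\Ga^{p^n}}$ only by finite groups of \emph{bounded} order (by the snake lemma applied to the kernel/cokernel of the pseudo-isomorphism, which are finite), so it affects neither $r$ nor the $\mu$- and $\la$-invariants of $\{\#M_{\Ga^{p^n}}[p^\infty]\}$, and the claimed formulas are themselves additive.

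Next I would do the two elementary computations. Writing $\La=\bbZ_p[\![\Ga]\!]\cong\bbZ_p[\![T]\!]$ with $T=\ga_0-1$, the coinvariants $M_{\Ga^{p^n}}$ is $M/\om_n M$ where $\om_n=(1+T)^{p^n}-1$. For $M=\La/(p^a)$ one gets $M_{\Ga^{p^n}}\cong (\bbZ/p^a)[T]/(\om_n)\cong(\bbZ/p^a)^{\oplus p^n}$, which is all $p$-torsion, has $\bbZ_p$-rank $0$, and cardinality $p^{a p^n}$, so $r=0$, $\mu=a$, $\la=0$ — matching $\mu(M)=a$, $\la(M)=0$. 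For $M=\La/(g^b)$ with $g$ distinguished of degree $d$: then $M$ is free of rank $bd$ over $\bbZ_p$, and $M_{\Ga^{p^n}}=M/\om_n M$; since $\gcd(g,\om_n)=1$ in $\bbQ_p[\![T]\!]$ for all $n$ (as $\om_n$ has no repeated roots and $g$'s roots are not roots of unity minus one for $n\gg0$, or more simply $g^b$ and $\om_n$ are coprime in the fraction field), $M\otimes_\La\bbQ_p$ is killed by $\om_n$, hence $M_{\Ga^{p^n}}$ is \emph{finite}; thus $r=0$ here too, and $\#M_{\Ga^{p^n}}=|\La/(g^b,\om_n)|_p^{-1}=p^{\ord_p\mathrm{Res}(g^b,\om_n)}$, whose $p$-adic valuation grows like $bd\cdot n+O(1)$ by a resultant computation (the Weierstrass-preparation/Newton-polygon estimate $v_p(\prod_{\om_n(x)=0}g(x)^b)=bd\,n+O(1)$), giving $\mu=0$, $\la=bd$ — matching $\mu(M)=0$, $\la(M)=\deg g^b=bd$. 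Reassembling, for general torsion $M$ pseudo-isomorphic to $\bigoplus\La/(p^{a_j})\oplus\bigoplus\La/(g_k^{b_k})$, the first type contributes nothing to the $\bbZ_p$-rank stabilization (it is eventually finite), so the only source of a nonzero stable free rank $r$ would be... none: in fact every torsion module has $M_{\Ga^{p^n}}$ eventually with $\bbZ_p$-rank $0$, i.e.\ $r=0$ always. Wait — the statement allows $r\ge0$ in general, and indeed $r=0$ for torsion $M$; keeping $r$ in the statement costs nothing and makes the lemma citable in a form where $M$ is only assumed to have torsion image after tensoring with the relevant ring, so I would simply observe $r=0$ here and note the free part is absorbed for later use.

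The main obstacle is the reduction step: one must verify carefully that coinvariants behave well enough under pseudo-isomorphism. The functor $M\mapsto M_{\Ga^{p^n}}$ is only right exact, so a pseudo-isomorphism $0\to A\to M\to N\to B\to 0$ (with $A,B$ finite) yields, via the snake lemma on $\cdot\om_n$, exact sequences relating $M_{\Ga^{p^n}}$ and $N_{\Ga^{p^n}}$ up to subquotients of $A$, $B$, and $A[\om_n]$, $B[\om_n]$, all of order bounded independently of $n$. Granting this (it is the standard control argument), the additivity of $\mu$ and of $\la$ under the relation "differ by $\bbZ_p$-modules of bounded order" — which holds because $\log_p$ of a bounded quantity is $O(1)$ and because $\rank_{\bbZ_p}$ is insensitive to finite modules — finishes the proof. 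I would also record the elementary fact, used implicitly, that if $\log_p a_n=\mu p^n+\la n+O(1)$ then $\mu,\la$ are unique, so the invariants extracted from $\{\#M_{\Ga^{p^n}}[p^\infty]\}$ are well-defined, and that the $p^\infty$-torsion subgroup can be separated from the (eventually vanishing, here) free part without changing either asymptotic coefficient.
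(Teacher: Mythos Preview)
Your reduction to pseudo-isomorphism and to elementary pieces is the same as the paper's, but there is a genuine error in the analysis of $M=\La/(g^b)$ with $g$ distinguished irreducible. You assert that $g$ and $\om_n=(1+T)^{p^n}-1$ are always coprime (``$g$'s roots are not roots of unity minus one''), and from this you conclude $M_{\Ga^{p^n}}$ is finite and hence $r=0$ for every torsion module. This is false: for each $k\ge0$ the polynomial $g=\om_k/\om_{k-1}$ (with $\om_{-1}:=1$) \emph{is} a distinguished irreducible polynomial of degree $(p-1)p^{k-1}$ (or $1$ if $k=0$), and its roots are exactly $\zeta-1$ for primitive $p^k$-th roots of unity $\zeta$. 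For all $n\ge k$ one has $g\mid\om_n$, so $\gcd(g,\om_n)=g$ and
\[
(\La/(g))_{\Ga^{p^n}}=\La/(g,\om_n)=\La/(g)\cong\bbZ_p^{\oplus\deg g},
\]
which is torsion-free of rank $r=\deg g$. Thus $\#M_{\Ga^{p^n}}[p^\infty]=1$, giving $\mu=0$, $\la=0$ for the sequence, while $\la(M)=\deg g=r+0$ as the lemma predicts. This is precisely the ``remaining case'' the paper isolates, and it is the entire reason the term $r$ appears in the statement. Your own parenthetical ``Wait --- the statement allows $r\ge0$'' should have flagged that something was off.

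The fix is straightforward: split the analysis of distinguished irreducible $g$ into two cases, $g\nmid\om_n$ for all $n$ (your resultant argument is fine there, with $r=0$) and $g=\om_k/\om_{k-1}$ for some $k$ (with $r=\deg g$ and trivial torsion for $n\ge k$). For higher powers $g^b$ in the cyclotomic case one can either compute directly (e.g.\ $\La/(g^b,\om_n)$ has $\bbZ_p$-rank $\deg g$ and torsion of order $p^{(b-1)\deg(g)\cdot n+O(1)}$), or, as the paper implicitly does, filter $\La/(g^b)$ by copies of $\La/(g)$ and use that the formulas are additive in short exact sequences up to bounded error.
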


\begin{proof}
One easily sees that if $M \to M'$ is a pseudo-isomorphism, then both
sides of the desired identities are invariant under replacing $M$ by
$M'$.  Using the structure theorem and additivity over direct sums, it
therefore suffices to check the case where $M = \bbZ_p[\![\Ga]\!]/(f)$
for prime $f \in \bbZ_p[\![\Ga]\!]$.  The case where $f$ is relatively
prime to all the augmentation ideals $I(\Ga^{p^k}) = (f_k)$ of
$\Ga^{p^k}$ for $k \geq 0$, or equivalently where $r=0$, is
well-known.  The remaining case is where $f = f_k/f_{k-1}$ for $k \geq
0$ (we set $f_{-1}=1$), whence one has
\[
(\bbZ_p[\![\Ga]\!]/(f))_{\Ga^{p^n}} = \bbZ_p[\![\Ga]\!]/(f,f_n) =
\bbZ_p[\![\Ga]\!]/(f) \approx \bbZ_p^{\oplus (p-1)p^{k-1}}
\]
for $n \geq k$, agreeing with the Iwasawa invariants.
\end{proof}

Let $F$ be a subextension of $K_\infty/K_0$.  If $F/K_0$ is finite, we
associate to it the following objects:
\begin{itemize}
\item $A(F) = \Pic(\calO_F) \otimes_\bbZ \bbZ_p$ is the $p$-part of
  its ideal class group,
\item $X(F) = \Pic(\calO_F,p^\infty) = \llim_n (\Pic(\calO_F,p^n)
  \otimes_\bbZ \bbZ_p)$ is the inverse limit of the $p$-parts of its
  ray class groups of conductor $p^n$,
\item $U(F) = (\calO_F \otimes_\bbZ \bbZ_p)^\times_\prop$ is the
  pro-$p$ part of its group of semilocal units,
\item $\scrE(F) = \calO_F^\times \otimes_\bbZ \bbZ_p$ is its group of
  global units $\otimes \bbZ_p$, and
\item $\scrC(F)$ is its group of elliptic units $\otimes \bbZ_p$, as
  defined in \cite[\S1]{R}.
\end{itemize}
If $F/K_0$ is infinite, and $? \in \{A,X,U,\scrE,\scrC\}$, we let
$?(F) = \llim_{F_0} ?(F_0)$, where $F_0$ ranges over the finite
subextensions of $F$, obtaining a finitely generated
$\bbZ_p[\![\Gal(F/K)]\!]$-module.  Note that Leopoldt's conjecture is
known in this case, so by the definition of ray class groups one has a
short exact sequence
\[
0 \to \scrE(F) \to U(F) \to X(F) \to A(F) \to 0.
\]
Class field theory identifies $A(F)$ (resp.\ $X(F)$) with the Galois
group of the maximal $p$-abelian extension of $F$ which is everywhere
unramified (resp.\ unramified at primes not dividing $p$).

The following improvement of Rubin's work is the main result of this
section, and the remainder of this section consists of its proof.

\begin{theorem}\label{T:rubin}
One has the equality of characteristic ideals,
\[
\chr_\La A(K_\infty) = \chr_\La(\scrE(K_\infty)/\scrC(K_\infty)).
\]
\end{theorem}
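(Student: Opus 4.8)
The plan is to upgrade Rubin's divisibility $\chr_\La A(K_\infty) \mid \chr_\La(\scrE(K_\infty)/\scrC(K_\infty))$ to an equality by comparing $\mu$- and $\la$-invariants on each $\eta$-isotypic component, reducing to the case $\Ga \cong \bbZ_p$ by specializing along the finitely many $\bbZ_p$-extensions inside $K_\infty^\Delta$ and using that an inclusion of characteristic ideals of torsion modules over $\bbZ_p[\Delta]^\eta[\![\Ga]\!]$ (a product of two-variable regular local rings) is an equality as soon as the two ideals agree after specialization to a Zariski-dense family of one-variable quotients. So first I would fix $\eta \neq \bf 1$ (the trivial component being handled separately, where the global units contribute a rank and one must also track the exceptional behaviour at the augmentation ideal), pass to an arbitrary $\bbZ_p$-line $\Ga' \subset \Ga$, and study the specialized modules $A(K_\infty^{\Ga'})^\eta$ and $(\scrE/\scrC)(K_\infty^{\Ga'})^\eta$, which are finitely generated torsion over $\bbZ_p[\Delta]^\eta[\![\Ga']\!]$.

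Second, the arithmetic input: I would invoke the analytic class number formula for the finite layers $K_n := K_\infty^{\Ga'^{p^n}}$, which gives $\#A(K_n)^\eta$ up to a unit in terms of $[\scrE(K_n):\scrC(K_n)]^\eta$ and an explicit local factor (the index of elliptic units in semilocal units, or equivalently the value of the $p$-adic $L$-function), exactly as Rubin uses it. Away from the inert prime this already forces equality of class numbers and unit indices; the obstruction Rubin faces at the inert prime is that the local/global control maps for $U, \scrE, \scrC, X, A$ are \emph{not} isomorphisms on finite layers, so the naive passage $\#A(K_n)^\eta \to \chr A(K_\infty)^\eta$ fails. This is where I expect the main difficulty to lie. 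The remedy is to compute the precise kernels and cokernels of the control maps using the short exact sequence $0 \to \scrE(F) \to U(F) \to X(F) \to A(F) \to 0$ in its Iwasawa-theoretic (Selmer-complex) form: taking $\Ga'^{p^n}$-homology of the complex computing $A(K_\infty^{\Ga'})$ over $\bbZ_p[\Delta]^\eta[\![\Ga']\!]$ introduces only controlled $H_1$-error terms (torsion supported at augmentation ideals), whose sizes I would bound using Lemma~\ref{L:numerics} and the known structure of $U(K_\infty)^\eta$ as an Iwasawa module.

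Third, with the error terms pinned down, I would apply Lemma~\ref{L:numerics} to both $A(K_\infty^{\Ga'})^\eta$ and $(\scrE/\scrC)(K_\infty^{\Ga'})^\eta$: since $\log_p \#A(K_n)^\eta$ and $\log_p [\scrE(K_n):\scrC(K_n)]^\eta$ differ by the $\log_p$ of the local index term, which has known $\mu$- and $\la$-invariants (Coates--Wiles / Rubin, and nonvanishing at the relevant finite-order points after twisting), the stabilized $\bbZ_p$-ranks $r$ and the growth of the $p^\infty$-torsion parts coincide on both sides, forcing $\mu(A(K_\infty^{\Ga'})^\eta) = \mu((\scrE/\scrC)(K_\infty^{\Ga'})^\eta)$ and likewise for $\la$. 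Since the $\mu,\la$-invariants of a one-variable torsion Iwasawa module determine its characteristic ideal, this gives $\chr A(K_\infty^{\Ga'})^\eta = \chr(\scrE/\scrC)(K_\infty^{\Ga'})^\eta$ for every $\bbZ_p$-line $\Ga'$. Finally, combining Rubin's divisibility over the full $\La^\eta = \bbZ_p[\Delta]^\eta[\![\Ga]\!]$ with equality after specialization to the Zariski-dense family $\{\Ga'\}$: the quotient of the two characteristic ideals is a principal ideal of $\La^\eta$ that specializes to a unit along each line, hence is a unit (here one uses that $\La^\eta$ is a product of factorial rings and that a nonzero nonunit has some irreducible factor vanishing on a Zariski-closed set meeting, but not containing, cofinitely many of the lines). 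Summing over $\eta$ yields $\chr_\La A(K_\infty) = \chr_\La(\scrE(K_\infty)/\scrC(K_\infty))$. The trivial component $\eta = \bf 1$ requires the separate, classical argument accounting for the rank-one global units and the pole of $L_{\bf 1}$, handled as in \cite{R2}.
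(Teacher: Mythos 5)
Your overall architecture --- compare $\mu$- and $\la$-invariants on one-variable specializations using the analytic class number formula, control the failure of the control maps via the Selmer-complex form of $0 \to \scrE \to U \to X \to A \to 0$, then propagate to the full $\La$ by specializing to many $\bbZ_p$-lines --- is the same as the paper's. But there is a genuine gap exactly at the point where the theorem is hard. The one-variable analysis does \emph{not} force $\la(A(K_\infty)) = \la(\scrE(K_\infty)/\scrC(K_\infty))$: the error terms in the two control sequences are not symmetric. On the class-group side the defect is a copy of $\bbZ_p[\Delta]^!$ injecting into $A(K_\infty)_{\Ga^{p^n}}$, while on the unit side the defect involves the image $I_n$ of $(\scrC(K_\infty)_{\Ga^{p^n}})^\Ga$ in $\bbZ_p[\Delta]$, which may or may not vanish. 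What one actually obtains is $\la(\scrE/\scrC) = \ep + \la(A)$ with an undetermined $\ep \in \{0,1\}$, i.e.\ the equality $(\chr_\La \bbZ_p)^\ep \cdot \chr_\La A(K_\infty) = \chr_\La(\scrE(K_\infty)/\scrC(K_\infty))$ only up to a possible extra zero at the augmentation ideal. Your claim that the stabilized ranks and torsion growth ``coincide on both sides'' skips over precisely this ambiguity, and your concluding density argument then rests on a false premise: the specializations of the quotient of the two characteristic ideals are not known to be units on each line, only to divide the augmentation variable there.

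Relatedly, you misplace the hard case. Rubin already proved equality for all $\eta$ nontrivial on the decomposition group of $p$; after reducing to $p$ totally split in $K_0/K$, the factor $(\chr_\La \bbZ_p)^\ep$ is the unit ideal on every $\eta \neq \mathbf{1}$ component, so those components follow at once from the sub-theorem above. The entire remaining difficulty is the component $\eta = \mathbf{1}$ (equivalently $K_0 = K$), which \cite{R2} does \emph{not} handle --- deferring it there is circular. To kill the residual augmentation-ideal factor one needs two further ingredients absent from your proposal: Rubin's compatibility (\cite[Corollary~7.9(i)]{R}) identifying the discrepancy over the $\bbZ_p^2$-extension with its image over each $\bbZ_p$-subextension, and an algebraic lemma on $R = \bbZ_p[\![S,T]\!]$ asserting that an element $g$ whose image in every quotient $R_{a,b} \cong \bbZ_p[\![U]\!]$ divides $U$ must be a unit. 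Only by running your $\mu$-$\la$ comparison on \emph{every} $\bbZ_p$-line and feeding the resulting family of divisibilities $\pi_{a,b}(f') \mid U$ into that lemma does one conclude $\ep = 0$. Without these steps the argument proves strictly less than the theorem.
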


In \cite[Theorem~4.1(ii)]{R} and \cite[Theorem~2(ii)]{R2} it is proved
that both sides are nonzero at each $\eta$-factor, that $\chr_\La
A(K_\infty)$ divides $\chr_\La(\scrE(K_\infty)/\scrC(K_\infty))$, and
that the $\eta$-factors are equal when $\eta$ is nontrivial on the
decomposition group of $p$ in $\Delta$.  To get equality for the
remaining $\eta$, we may thus reduce to the case where $p$ is totally
split in $K_0/K$.  We also specialize our notation to where
$K_\infty^\Delta$ is any $\bbZ_p^1$-extension.  We index finite
subextensions $F$ of $K_\infty/K_0$ as $F = K_n =
K_\infty^{\Ga^{p^n}}$ for $n \geq 0$.  Fix a topological generator
$\ga \in \Ga$, and for brevity write $\La_n = \bbZ_p[\scrG/\Ga^{p^n}]
= \La/(\ga^{p^n}-1)$.

There is unique $\bbZ_p^2$-extension of $K$, and it contains all
$\bbZ_p$-extensions of $K$.  This extension is unramified at all
primes not dividing $p$, and Lubin--Tate theory shows it is totally
ramified at $p$.  The same ramification behavior is true of any
$\bbZ_p$-extension, as well as of $K_\infty/K_0$ because $p$ is
totally split in $K_0/K$.  In particular, if $S_n$ denotes the set of
places of $K_n$ lying over $p$, then the restriction maps $S_{n+1} \to
S_n$ are bijections, and $S_n$ is a principal homogeneous
$\Delta$-set.  Fixing once and for all $v_0 \in S_0$, with unique lift
$v_n \in S_n$, declaring $v_n$ to be a basepoint of $S_n$ gives an
identification $\bbZ_p[S_n] \cong \bbZ_p[\Delta]$ of
$\bbZ_p[\Delta]$-modules.  We write $\invt$ for the composite of the
semilocal restriction map, the invariant maps of local class field
theory, and this identification:
\[
\invt \cn
H^2(G_{K,\{p\}},\bbZ_p(1))
\to
\bigoplus_{v \in S_n} H^2(G_{K_{n,v}},\bbZ_p(1))
\cong
\bbZ_p[S_n]
\cong
\bbZ_p[\Delta].
\]
Also, it follows that $p-1$ does not divide the ramification degree of
$K_\infty/\bbQ$ at $p$, so that $\mu_{p^\infty}(K_{\infty,v}) = 1$ for
any place $v$ of $K_\infty$ lying over $p$.  Therefore, for $F/K_0$
finite the group $(\calO_F \otimes_\bbZ \bbZ_p)^\times$ is already
pro-$p$.

Since $\chr_\La A(K_\infty)$ divides
$\chr_\La(\scrE(K_\infty)/\scrC(K_\infty))$, their Iwasawa $\mu$- and
$\la$-invariants, considered as a $\bbZ_p[\![\Ga]\!]$-modules, satisfy
\begin{equation}\label{E:rubin}
\mu(A(K_\infty)) \leq \mu(\scrE(K_\infty)/\scrC(K_\infty)),
\qquad
\la(A(K_\infty)) \leq \la(\scrE(K_\infty)/\scrC(K_\infty)),
\end{equation}
We shall improve these inequalities to the claim that for some $\ep
\in \{0,1\}$ one has
\[
\mu(A(K_\infty)) = \mu(\scrE(K_\infty)/\scrC(K_\infty)),
\qquad
\ep + \la(A(K_\infty)) = \la(\scrE(K_\infty)/\scrC(K_\infty)),
\]
and additionally
\[
\rank_{\bbZ_p} A(K_\infty)_\scrG = 0,
\qquad
\rank_{\bbZ_p} (\scrE(K_\infty)/\scrC(K_\infty))_\scrG = \ep.
\]
These computations are equivalent to the claim that
\begin{equation}\label{E:subtheorem}
(\chr_\La \bbZp)^\ep \cdot \chr_\La A(K_\infty)
= \chr_\La \scrE(K_\infty)/\scrC(K_\infty).
\end{equation}
Granted \eqref{E:subtheorem}, let us show how to deduce the theorem.
Let $K'_\infty$ denote the compositum of $K_0$ with the unique
$\bbZ_p^2$-extension of $K$, and write $\scrG' = \Gal(K'_\infty/K) =
\Delta \times \Ga'$, $\La' = \La(\scrG') = \bbZ_p[\![\scrG']\!]$, and
$\proj \cn \La' \twoheadrightarrow \La$.  By Rubin's theorem, there
exist $f' \in \La(\scrG')$ and $f \in \La(\scrG)$ with
\[
f' \cdot \chr_{\La'} A(K'_\infty)
=
\chr_{\La'} \scrE(K'_\infty)/\scrC(K'_\infty),
\quad
f \cdot \chr_\La A(K_\infty)
=
\chr_\La \scrE(K_\infty)/\scrC(K_\infty).
\]
By \cite[Corollary 7.9(i)]{R} one has $\proj(f') = f$ up to a unit in
$\La$.  Since $\proj$ is a homomorphism of semilocal rings that is a
bijection on local factors and restricts to a local homomorphism on
each local factor, it follows that $f'$ is a unit (resp.\ restricts to
a unit over a given local factor) in $\La'$ if and only if $f$ is a
unit (resp.\ restricts to a unit over the corresponding local factor)
in $\La$.  On the other hand, \eqref{E:subtheorem} implies that $f$
divides $\chr_\La \bbZ_p$ in $\La$.  Since $(\chr_\La \bbZ_p)^\eta =
\La^\eta$, the unit ideal, if $\eta \neq {\bf1}$, we deduce the
identity of the theorem for both $\bbZ_p^1$- and $\bbZ_p^2$-extensions
over each such $\eta$-factor.  We only have left to consider the case
where $\eta={\bf1}$, or rather where $\Delta$ is trivial and $K_0=K$.

\begin{lemma}
Write $R = \bbZ_p[\![S,T]\!]$, and for $a,b \in \bbZ_p$ not both
divisible by $p$, write $R_{a,b} = R/((1+S)^a(1+T)^b-1)$ with
$\pi_{a,b} \cn R \twoheadrightarrow R_{a,b}$.  We identify $R_{a,b}
\cong \bbZ_p[\![U]\!]$, where $U = \pi_{a,b}(S)$ if $p \nmid b$ and
$U = \pi_{a,b}(T)$ otherwise.

Suppose $g \in R$ is such that for all $a,b$ above, $\pi_{a,b}(g)$
divides $U$ in $R_{a,b}$.  Then $g$ is a unit.
\end{lemma}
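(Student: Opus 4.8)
The plan is to prove the contrapositive, and in fact I will use only the pairs $(a,b) = (1,0)$ and $(a,b) = (-c,1)$ for $c \in \bbZp$. Suppose $g$ is not a unit; I will show that $\pi_{a,b}(g)$ fails to divide $U$ for one of these pairs. Note first that the divisibility hypothesis already forces $\pi_{a,b}(g) \neq 0$ for every admissible $(a,b)$, since $U$ is a nonzero element of the domain $R_{a,b} \cong \bbZp[\![U]\!]$; I will use this without further comment. The structural input is that $\bbZp[\![U]\!]$ is a domain in which $U$ is a prime element (its quotient by $(U)$ is $\bbZp$, a domain), so the only divisors of $U$ are units and associates of $U$; hence any nonzero non-unit of $\bbZp[\![U]\!]$ dividing $U$ has the form $(\text{unit})\cdot U$, i.e.\ has vanishing constant term and a unit as its coefficient of $U$.

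Since $g \notin R^\times$, it lies in the maximal ideal $\fkm = (p,S,T)$, so $g(0,0) \in p\bbZp$. First I would apply the hypothesis at $(a,b) = (1,0)$: here $R_{1,0} = R/(S) \cong \bbZp[\![T]\!]$, with $U = \pi_{1,0}(T) = T$ and $\pi_{1,0}(g) = g(0,T)$, which is a nonzero non-unit (its constant term is $g(0,0) \in p\bbZp$) dividing $T$; by the preceding paragraph, $g(0,0) = 0$ and $\partial_T g(0,0) \in \bbZp^\times$. Next, for an arbitrary $c \in \bbZp$ I would apply the hypothesis at $(a,b) = (-c,1)$: the relation $(1+S)^{-c}(1+T) = 1$ shows $R_{-c,1} \cong \bbZp[\![U]\!]$ with $\pi_{-c,1}(S) = U$ and $\pi_{-c,1}(T) = (1+U)^{c} - 1$, so $\pi_{-c,1}(g) = g\bigl(U, (1+U)^{c}-1\bigr)$; since $g(0,0) = 0$, this is again a nonzero non-unit dividing $U$, hence of the form $(\text{unit})\cdot U$. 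A one-line chain-rule computation, using $(1+U)^{c} - 1 = cU + O(U^{2})$ and $g(0,0) = 0$, identifies its coefficient of $U$ as $\partial_S g(0,0) + c\,\partial_T g(0,0)$. Thus $\partial_S g(0,0) + c\,\partial_T g(0,0) \in \bbZp^\times$ for all $c \in \bbZp$; but $\partial_T g(0,0) \in \bbZp^\times$ by the previous step, so the choice $c = -\partial_S g(0,0)\,\partial_T g(0,0)^{-1} \in \bbZp$ makes the left side vanish, a contradiction. Hence $g$ is a unit.

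The argument has no essential difficulty: everything is routine once one notices that $g$ being a unit can be detected from the linear behaviour of $g$ in the two coordinate directions and in a one‑parameter family interpolating between them. The only points requiring verification are the identifications $R_{1,0} \cong \bbZp[\![T]\!]$ and $R_{-c,1} \cong \bbZp[\![U]\!]$ and the formulas for $\pi_{a,b}(S)$, $\pi_{a,b}(T)$; these hold because $1+S$ and $1+T$ are $1$-units of $R$, so $(1+S)^{t}$ is defined for $t \in \bbZp$ by the binomial series with expansion $1 + tS + O(S^{2})$, and because one-variable Weierstrass preparation (equivalently, the formal implicit function theorem) solves the defining relation of $R_{a,b}$ for one coordinate in terms of the other. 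The only thing to watch is the bookkeeping of which of $S$ and $T$ becomes the coordinate $U$ in each quotient.
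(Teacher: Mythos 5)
Your proof is correct and is essentially the paper's argument in lightly different clothing: both first use an axis specialization to force the constant term of $g$ to vanish and one linear coefficient to be a unit, and then specialize along the direction determined by the linear part of $g$ to contradict divisibility of $U$. Indeed your pair $(-c,1)$ with $c=-\partial_S g(0,0)\,\partial_T g(0,0)^{-1}$ cuts out the same quotient ring as the paper's choice $(y,z)$ (the linear coefficients of $g$), and in both cases the contradiction is that $\pi(g)\equiv 0 \pmod{U^2}$ while $\pi(g)$ must be an associate of $U$.
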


\begin{proof}
Write $g = x + yS + zT + O((S,T)^2)$ with $x,y,z \in \bbZ_p$; we are
to show that $p \nmid x$.  Since $\pi_{0,1}(g)$ divides $U$ in
$R_{0,1}$, and $R_{0,1}$ is a UFD with $U$ a prime element, it follows
that $\pi_{0,1}(g)$ is either a unit or $U$ times a unit.  As
$\pi_{0,1}(g) = x + yU + O(U^2)$, the first case is equivalent to $p
\nmid x$, and the second case is equivalent to $x=0$ and $p \nmid y$.
But in the second case the identity
\[
g = yS + zT + O((S,T)^2) = (1+S)^y(1+T)^z-1 + O((S,T)^2)
\]
would imply $\pi_{y,z}(g) = 0 + O(U^2)$, that is $U^2$ divides
$\pi_{y,z}(g)$ in $R_{y,z}$, contradicting that $\pi_{y,z}(g)$ divides
$U$.
\end{proof}

Choose a $\bbZ_p$-basis $\ga_1,\ga_2 \in \Ga'$, so that $\ker(\Ga'
\twoheadrightarrow \Ga) = (\ga_1^a\ga_2^b)^{\bbZ_p}$ for some $a,b \in
\bbZ_p$ not both divisible by $p$.  Set $S=\ga_1-1,T=\ga_2-1 \in
\La'$, and note that $\ker(\La' \twoheadrightarrow \La)$ is generated
by $(1+S)^a(1+T)^b-1$, so that the map $\La' \twoheadrightarrow \La$
is identified with the map $\pi_{a,b} \cn R \twoheadrightarrow
R_{a,b}$ of the preceding lemma.  Under this identification, the
augmentation ideal $\chr_\La \bbZ_p$ is generated by $U \in R_{a,b}$,
so we have that $\pi_{a,b}(f') = f$ divides $U$.  Since
$K_\infty^\Delta = K_\infty$ was allowed to be any $\bbZ_p$-extension
of $K$, and conversely every such pair of $a,b$ arises from some
choice of $K_\infty$, the preceding lemma shows that $f'$ is a unit,
and therefore so is $f$, proving the theorem at once for $\bbZ_p^1$-
and $\bbZ_p^2$-extensions.

We begin the proof of \eqref{E:subtheorem} (and no longer assume that
$K_0=K$).  As mentioned at the beginning of this section, our approach
is to use base change of Selmer complexes to measure the failure of
the maps $(\scrE(K_\infty)/\scrC(K_\infty))_{\Ga^{p^n}} \to
\scrE(K_n)/\scrC(K_n)$ and $A(K_\infty)_{\Ga^{p^n}} \to A(K_n)$ to be
isomorphisms.

Since we use base change in the derived category, we give some
generalities on the operation $\Lotimes_\La \La_n$.  We first compute
that $\La_n[0] \cong [\La \xrightarrow{\ga^{p^n}-1} \La]$ as objects
in the derived category of $\La$-modules, the latter concentrated in
degrees $-1,0$, so that for any $\La$-module (resp.\ complex of
$\La$-modules) $X$ one may compute $X \Lotimes_\La \La_n$ as $[X
  \xrightarrow{\ga^{p^n}-1} X]$ (resp.\ as the mapping cone of
$\ga^{p^n}-1$ on $X$).  The induced map $X \Lotimes_\La \La_{n+1} \to
X \Lotimes_\La \La_n$ corresponds to the morphism $[X
  \xrightarrow{\ga^{p^{n+1}}-1} X] \to [X \xrightarrow{\ga^{p^n}-1}
  X]$ given by multiplication by $1+\ga^{p^n}+\cdots+\ga^{(p-1)p^n}$
in shift degree $-1$, and by the identity in shift degree $0$.
Alternatively, the $\Tor$ spectral sequence degenerates to short exact
sequences
\begin{equation}\label{E:generic-base-change}
0 \to
H^i(X)_{\Ga^{p^n}}
\to
H^i(X \Lotimes_\La \La_n)
\to
H^{i+1}(X)^{\Ga^{p^n}}
\to 0,
\end{equation}
and the natural morphism from the above sequence for $n+1$ to the
sequence for $n$ is given by the natural projection on the first term,
and by multiplication by $1+\ga^{p^n}+\cdots+\ga^{(p-1)p^n}$ on the
last term.  The Bockstein homomorphism $\beta = \beta_X$, defined as
the connecting homomorphism in the exact triangle
\begin{multline*}
X \Lotimes_\La
\left(\La_n \xrightarrow{\ga^{p^n}-1} \La/(\ga^{p^n}-1)^2
  \to \La_n \to \La_n[1]\right) \\
\cong
\left(X \Lotimes_\La \La_n
\xrightarrow{\ga^{p^n}-1}
X \Lotimes_\La \La/(\ga^{p^n}-1)^2
\to
X \Lotimes_\La \La_n
\xrightarrow\beta
X \Lotimes_\La \La_n[1]\right),
\end{multline*}
is computed on cohomology as the composite
\begin{multline*}
H^i(\beta) \cn
H^i(X \Lotimes_\La \La_n)
\twoheadrightarrow
H^i(X \Lotimes_\La \La_n)/H^i(X)_{\Ga^{p^n}} \\
\cong
H^{i+1}(X)^{\Ga^{p^n}}
\hookrightarrow
H^{i+1}(X)
\twoheadrightarrow
H^{i+1}(X)_{\Ga^{p^n}}
\hookrightarrow
H^{i+1}(X \Lotimes_\La \La_n).
\end{multline*}
Note that if $Z$ is a finitely generated, torsion $\La$-module, then
$\rank_{\bbZ_p} Z^{\Ga^{p^n}} = \rank_{\bbZ_p} Z_{\Ga^{p^n}}$.

If $X$ satisfies $X = X^\Ga$, then the above computations reduce to $X
\Lotimes_\La \La_n \cong X[1] \oplus X$, in such a way that the
natural map $X \Lotimes_\La \La_{n+1} \to X \Lotimes_\La \La_n$ is
identified with multiplication by $p$ in shift degree $-1$, and with
the identity map in shift degree $0$.  The Bockstein homomorphism
\[
\beta \cn
X[1] \oplus X = X \Lotimes_\La \La_n
\to
X \Lotimes_\La \La_n[1] = X[2] \oplus X[1]
\]
is the identity map on $X[1]$ and zero on the other factors.  In this
scenario, we write $\beta\inv = \beta_X\inv \cn X[2] \oplus X[1] \to
X[1] \oplus X$ for the map that is inverse to this identity map on
$X[1]$ and zero on the other factors.  Any morphism $f \cn Y \to X$
gives rise to a morphism $f \Lotimes_\La \La_n \cn Y \Lotimes_\La
\La_n \to X \Lotimes_\La \La_n = X[1] \oplus X$.  Writing $f
\otimes_\La \La_n$ for the projection of $f \Lotimes_\La \La_n$ onto
the second component, $X$, the commutative diagram
\[\begin{array}{ccccc}
Y \Lotimes_\La \La_n
& \xrightarrow{f \Lotimes_\La \La_n} &
X \Lotimes_\La \La_n
& = &
X[1] \oplus X \\
\beta_Y\downarrow\phantom{\beta_Y}
& &
\beta_X\downarrow\phantom{\beta_X}
& &
\phantom\sim\searrow\sim \\
Y \Lotimes_\La \La_n[1]
& \xrightarrow{f \Lotimes_\La \La_n[1]} &
X \Lotimes_\La \La_n[1]
& = &
X[2] \oplus X[1]
\end{array}\]
shows that the projection of $f \Lotimes_\La \La_n$ onto the first
component, $X[1]$, is computed by $\beta_X\inv \circ (f \otimes_\La
\La_n)[1] \circ \beta_Y$.

We now return to the setting of the theorem, recalling \Nekovar's
constructions of the fundamental invariants of number fields in terms
of Selmer complexes in \cite[\S9.2,\S9.5]{N} (with notations adapted
to our situation).  Throughout, $n \geq 0$ ranges over nonnegative
integers.  For brevity we write
\[
\bfR\Ga_n = \bfR\Ga_\cont(G_{K_n,\{p\}},\bbZ_p(1)),
\qquad
\bfR\Ga_\Iw
= \bfR\Ga_\Iw(K_\infty/K_0,\bbZ_p(1))
= \bfR\!\llim_n \bfR\Ga_n,
\]
and $H^i_? = H^i(\bfR\Ga_?)$ for $? \in \{n,\Iw\}$.  Then one
has the computations
\begin{gather*}
H^i_n = 0,\ i \neq 1,2, \qquad
H^1_n = \calO_{K_n,\{p\}}^\times \otimes_\bbZ \bbZ_p, \\
0 \to
\Pic(\calO_{K_n,\{p\}}) \otimes_\bbZ \bbZ_p
\to
H^2_n
\xrightarrow{\invt}
\bbZ_p[\Delta]
\xrightarrow{\summ}
\bbZ_p
\to 0,
\end{gather*}
and, passing to inverse limits (Mittag-Leffler holds by compactness),
\begin{gather*}
H^i_\Iw = 0,\ i \neq 1,2, \qquad
H^1_\Iw = \llim_n (\calO_{K_n,\{p\}}^\times \otimes_\bbZ \bbZ_p), \\
0 \to \llim_n (\Pic(\calO_{K_n,\{p\}}) \otimes_\bbZ \bbZ_p)
\to
H^2_\Iw
\xrightarrow{\invt}
\bbZ_p[\Delta]
\xrightarrow{\summ}
\bbZ_p \to 0.
\end{gather*}
Let $U^- = \bbZ_p[\Delta][-1] \oplus \bbZ_p[\Delta][-2]$, considered
as a perfect complex of $\La$-modules, or as a complex of
$\La_n$-modules.  One constructs a map $i^-_n \cn \bfR\Ga_n \to U^-$
via the local valuation maps in degree one and the local invariant
maps in degree two, and obtains a map $i^-_\Iw \cn \bfR\Ga_\Iw \to
U^-$ from the $i^-_n$ by taking the inverse limit on $n$.  By taking
mapping fibers of $i^-_n$ and $i^-_\Iw$, one obtains complexes
$\bfR\wt\Ga_{f,n}$ of $\La_n$-modules and a perfect complex
$\bfR\wt\Ga_{f,\Iw}$ of $\La$-modules sitting in exact triangles
\[
\bfR\wt\Ga_{f,n}
\to
\bfR\Ga_n
\xrightarrow{i^-_n}
U^-
\to
\bfR\wt\Ga_{f,n}[1]
\]
and
\[
\bfR\wt\Ga_{f,\Iw}
\to
\bfR\Ga_\Iw
\xrightarrow{i^-_\Iw}
U^-
\to
\bfR\wt\Ga_{f,\Iw}[1].
\]
Writing $\wt H^i_{f,?} = H^i(\bfR\wt\Ga_{f,?})$ for $? \in
\{n,\Iw\}$, one has the computations
\[
\wt H^i_{f,n} = \begin{cases}
0 & i \neq 1,2,3 \\
\scrE(K_n) & i=1 \\
A(K_n) & i=2 \\
\bbZ_p & i=3,
\end{cases}
\qquad \text{and} \qquad
\wt H^i_{f,\Iw} = \begin{cases}
0 & i \neq 1,2,3 \\
\scrE(K_\infty) & i=1 \\
A(K_\infty) & i=2 \\
\bbZ_p & i=3.
\end{cases}
\]

By control for Galois cohomology, the natural map $\bfR\Ga_\Iw
\Lotimes_\La \La_n \to \bfR\Ga_n$ is an isomorphism, compatible with
varying $n$.  Since $U^- = (U^-)^\Ga$, one has the computation $U^-
\Lotimes_\La \La_n \cong U^-[1] \oplus U^-$.  It follows from the
definition of $i^-_\Iw$ as an inverse limit that $i^-_\Iw \otimes_\La
\La_n = i^-_n$, so that $i^-_\Iw \Lotimes_\La \La_n = (\beta_{U^-}\inv
\circ i^-_n[1] \circ \beta_{\bfR\Ga_n}, i^-_n)$.  Thus we have a
commutative diagram
\[\begin{array}{ccccccc}
\bfR\wt\Ga_{f,\Iw} \Lotimes_\La \La_n
& \to &
\bfR\Ga_n
& \xrightarrow{i^-_\Iw \Lotimes_\La \La_n} &
U^-[1] \oplus U^-
& \to &
\bfR\wt\Ga_{f,\Iw} \Lotimes_\La \La_n[1] \\
& & =\downarrow\phantom= & & \pr_2\downarrow\phantom{\pr_2} \\
\bfR\wt\Ga_{f,n}
& \to &
\bfR\Ga_n
& \xrightarrow{i^-_n} &
U^-
& \to &
\bfR\wt\Ga_{f,n}[1],
\end{array}\]
which we complete to a morphism of exact triangles via a morphism
$\BC_n \cn \bfR\wt\Ga_{f,\Iw} \Lotimes_\La \La_n \to
\bfR\wt\Ga_{f,n}$.  Taking mapping fibers of the resulting morphism of
triangles gives an exact triangle
\[
\Fib(\BC_n) \to 0 \to U^-[1] \to \Fib(\BC_n)[1],
\]
hence an isomorphism $\Fib(\BC_n) \cong U^-$ and an exact triangle
\begin{equation}\label{E:BC-cone}
U^-
\xrightarrow{j_n}
\bfR\wt\Ga_{f,\Iw} \Lotimes_\La \La_n
\xrightarrow{\BC_n}
\bfR\wt\Ga_{f,n}
\xrightarrow{k_n}
U^-[1].
\end{equation}
It is easy to compute that $j_n$ is the composite of the inclusion
$U^- \hookrightarrow U^- \oplus U^-[-1]$ and the shifted connecting
homomorphism $U^- \oplus U^-[-1] \to \bfR\wt\Ga_{f,\Iw} \Lotimes_\La
\La_n$.  The construction of the snake lemma shows that $k_n$ is the
composite
\[
\bfR\wt\Ga_{f,n}
\to
\bfR\Ga_n
\xrightarrow{i^-_\Iw \Lotimes_\La \La_n}
U^-[1] \oplus U^-
\xrightarrow{\pr_1}
U^-[1],
\]
or in other words the composite of $\bfR\wt\Ga_{f,n} \to \bfR\Ga_n$
with $\beta_{U^-}\inv \circ i^-_n[1] \circ \beta_{\bfR\Ga_n}$.  Of
course, the source or target of $H^i(k_n) \cn \wt H^i_{f,n} \to
H^{i+1}U^-$ is zero if $i \neq 1$, and if $i=1$ this computation
simplifies to
\[
\scrE(K_n)
\to
H^1_n
\xrightarrow\beta
H^2_n
\xrightarrow{\invt}
\bbZ_p[\Delta].
\]
The kernel of $\beta$ contains the universal norms in $\scrE(K_n)$ for
$K_\infty/K_n$, and in particular $\scrC(K_n)$ (see
\cite[Proposition~II.2.5]{D} for the norm relations), which itself is
of finite index in $\scrE(K_n)$.  Since $\bbZ_p[\Delta]$ is torsion
free, it follows that $H^1(k_n) = 0$, too.  Since $H^*(k_n)=0$, the
long exact sequence associated to the triangle \eqref{E:BC-cone}
breaks up into the short exact rows in the following diagrams, and
\eqref{E:generic-base-change} gives the short exact columns:
\begin{equation}\label{E:crosses}
\begin{array}{r@{\ }c@{\ }l}
& \scrE(K_\infty)_{\Ga^{p^n}} \\ & \downarrow \\
\bbZ_p[\Delta] \to & H^1(\bfR\wt\Ga_{f,\Iw} \Lotimes_\La \La_n)
  & \to \scrE(K_n), \\
& \downarrow \\ & A(K_\infty)^{\Ga^{p^n}}
\end{array}
\quad
\begin{array}{r@{\ }c@{\ }l}
& A(K_\infty)_{\Ga^{p^n}} \\ & \downarrow \\
\bbZ_p[\Delta] \to & H^2(\bfR\wt\Ga_{f,\Iw} \Lotimes_\La \La_n)
  & \to A(K_n). \\
& \downarrow \\ & \bbZ_p
\end{array}
\end{equation}
(The triangle \eqref{E:BC-cone} also gives the computation
$H^3(\bfR\wt\Ga_{f,\Iw} \Lotimes_\La \La_n) \cong \bbZ_p$.)  The
composite arrows from the top to the right points of the two diagrams
give the respective control maps $\scrE(K_\infty)_{\Ga^{p^n}} \to
\scrE(K_n)$ and $A(K_\infty)_{\Ga^{p^n}} \to A(K_n)$.

It is crucial to compute the transition morphisms from the diagrams
\eqref{E:crosses} associated to $n+1$ to those associated to $n$.
Explicitly, the transition maps on the upper (resp.\ lower, right)
points are the natural projections (resp.\ multiplication by
$1+\ga^{p^n}+\cdots+\ga^{(p-1)p^n}$, the norm maps), and the maps on
the left points are \emph{multiplication by $p$} because the term
$U^-[1]$ in the sequence \eqref{E:BC-cone} is identified with first
summand of $U^- \Lotimes_\La \La_n \cong U^-[1] \oplus U^-$.

We consider the second diagram in \eqref{E:crosses}.  The computation
$\rank_{\bbZ_p} A(K_\infty)_{\Ga^{p^n}} = \de-1$ is immediate.  A
diagram chase identifies the composite map $\bbZ_p[\Delta] \to \bbZ_p$
as the sum map.  Since $\bbZ_p$ is uniquely a $\La$-direct summand of
$\bbZ_p[\Delta]$, we may canonically refine the diagram to the short
exact sequence
\begin{equation}\label{E:SES-A}
0 \to \bbZ_p[\Delta]^!
\to A(K_\infty)_{\Ga^{p^n}} \to
A(K_n) \to 0,
\end{equation}
and in particular there is an injection
$A(K_\infty)_{\Ga^{p^n}}[p^\infty] \hookrightarrow A(K_n)$ of finite
abelian groups.  Applying the snake lemma to the commutative diagram
\[\begin{array}{rcccccl}
0 \to &
\bbZ_p[\Delta]^!
& \to &
\displaystyle
\frac{A(K_\infty)_{\Ga^{p^{n+1}}}}{A(K_\infty)_{\Ga^{p^{n+1}}}[p^\infty]}
& \to &
\displaystyle
\frac{A(K_{n+1})}{A(K_\infty)_{\Ga^{p^{n+1}}}[p^\infty]}
& \to 0 \\
& p \downarrow \phantom{p} & & \downarrow & & \downarrow \\
0 \to &
\bbZ_p[\Delta]^!
& \to &
\displaystyle
\frac{A(K_\infty)_{\Ga^{p^n}}}{A(K_\infty)_{\Ga^{p^n}}[p^\infty]}
& \to &
\displaystyle
\frac{A(K_n)}{A(K_\infty)_{\Ga^{p^n}}[p^\infty]}
& \to 0,
\end{array}\]
and examining the final column, we get the exact sequence
\[
0 \to \bbZ_p[\Delta]^!/p
\to
\frac{A(K_{n+1})}{A(K_\infty)_{\Ga^{p^{n+1}}}[p^\infty]}
\to
\frac{A(K_n)}{A(K_\infty)_{\Ga^{p^n}}[p^\infty]} \to 0.
\]
This implies that
\[
\frac{\#A(K_n)}{\#A(K_\infty)_{\Ga^{p^n}}[p^\infty]}
=
p^{(\de-1)n} \frac{\#A(K_0)}{\#A(K_\infty)_{\Ga^{p^0}}[p^\infty]},
\]
so that
\begin{gather*}
\mu(A(K_\infty))
= \mu(\{\#A(K_\infty)_{\Ga^{p^n}}[p^\infty]\})
= \mu(\{\#A(K_n)\}), \\
\la(A(K_\infty))
= \de-1 + \la(\{\#A(K_\infty)_{\Ga^{p^n}}[p^\infty]\})
= \la(\#\{A(K_n)\}).
\end{gather*}

We now consider the first diagram in \eqref{E:crosses}.  Since
$\scrE(K_n)$ is a free $\bbZ_p$-module, we may choose a splitting
$H^1(\bfR\wt\Ga_{f,\Iw} \Lotimes_\La \La_n) \cong \bbZ_p[\Delta]
\oplus \scrE(K_n)$.  It follows from the norm relations on elliptic
units that the map $\scrC(K_\infty)_{\Ga^{p^n}} \to \scrC(K_n)$ is
surjective; combining this fact with the proof of
\cite[Theorem~7.7]{R} shows that ther kernel of this map is
$(\scrC(K_\infty)_{\Ga^{p^n}})^\Ga$, is $\scrG$-isomorphic to $\bbZp$,
and is a $\bbZ_p$-direct summand of $\scrC(K_\infty)_{\Ga^{p^n}}$.
Moreover, this map followed by the inclusion $\scrC(K_n) \subseteq
\scrE(K_n)$ is equal to the composite
\[
\scrC(K_\infty)_{\Ga^{p^n}} \to \scrE(K_\infty)_{\Ga^{p^n}} \to
\scrE(K_n),
\]
which shows that the subset of $\scrC(K_\infty)_{\Ga_{p^n}}$ mapping
into the summand $\bbZ_p[\Delta]$ is again
$(\scrC(K_\infty)_{\Ga_{p^n}})^\Ga$.  Writing $\bbZ_p[\Delta] =
\bbZ_p[\Delta]^{\bf1} \oplus \bbZ_p[\Delta]^!$, the image $I_n$ of
$(\scrC(K_\infty)_{\Ga_{p^n}})^\Ga \to \bbZ_p[\Delta]$ is equal to
either $0$ or $p^{e_n}\bbZ_p[\Delta]^{\bf1}$ with $e_n \geq 0$.  If
$I_n = 0$ we set $e_n = 0$, so that in all cases we have
$(\bbZ_p[\Delta]/I_n)[p^\infty] \cong \bbZ/p^{e_n}$.  Write $\ep_n = 1
- \rank_{\bbZ_p} I_n$.  Again considering the proof of
\cite[Theorem~7.7]{R} shows that, in the commutative diagram
\[\begin{array}{rcccccl}
0 \to &
\bbZ_p
& \to &
\scrC(K_\infty)_{\Ga^{p^{n+1}}}
& \to &
\scrC(K_{n+1})
& \to 0 \\
& f \downarrow \phantom{f} & & \downarrow & & \downarrow \\
0 \to &
\bbZ_p
& \to &
\scrC(K_\infty)_{\Ga^{p^n}}
& \to &
\scrC(K_n)
& \to 0,
\end{array}\]
the map $f$ is multiplication by $p$ (up to a unit).  Let $v_n$ be a
basis vector for $I_n$ if $\ep_n=0$, and $v_n=0$ if $\ep_n=1$.  The
commutativity of the square
\[\begin{array}{cccc}
\bbZ_p & \xrightarrow{\cdot v_{n+1}} & I_{n+1} \subseteq & \bbZ_p[\Delta] \\
f \downarrow \phantom{f} & & & \phantom{p} \downarrow p \\
\bbZ_p & \xrightarrow{\cdot v_n} & I_n \subseteq & \bbZ_p[\Delta] \\
\end{array}\]
implies that $v_n=0$ if and only if $v_{n+1}=0$, so that
$\ep_n=\ep_{n+1}$ is independent of $n$; denote it henceforth by
$\ep$.  When $\ep=0$, it is also easy to deduce from this
commutativity that $e_n=e_{n+1}$ is independent of $n$; denote it
henceforth by $e$.

The definition of $I_n$ allows us to modify the first diagram in
\eqref{E:crosses} to a short exact sequence
\begin{equation}\label{E:SES-EC}
0 \to (\scrE(K_\infty)/\scrC(K_\infty))_{\Ga^{p^n}}
\to
\bbZ_p[\Delta]/I_n \oplus \scrE(K_n)/\scrC(K_n)
\to
A(K_\infty)^{\Ga^{p^n}} \to 0.
\end{equation}
One has $\rank_{\bbZ_p} A(K_\infty)^{\Ga^{p^n}} = \rank_{\bbZ_p}
A(K_\infty)_{\Ga^{p^n}} = \de-1$, and combining this with the above
sequence gives $\rank_{\bbZ_p}
(\scrE(K_\infty)/\scrC(K_\infty))_{\Ga^{p^n}} = \ep$.  The above
sequence also gives an exact sequence of finite abelian groups
\[
0
\to (\scrE(K_\infty)/\scrC(K_\infty))_{\Ga^{p^n}}[p^\infty]
\to \bbZ/p^e \oplus \scrE(K_n)/\scrC(K_n)
\to A(K_\infty)^{\Ga^{p^n}}[p^\infty],
\]
where $\#A(K_\infty)^{\Ga^{p^n}}[p^\infty]$ is bounded independently
of $n$.  It follows that
\begin{gather*}
\mu(\scrE(K_\infty)/\scrC(K_\infty))
  = \mu(\{\#\scrE(K_n)/\scrC(K_n)\}), \\
\la(\scrE(K_\infty)/\scrC(K_\infty))
  = \ep + \la(\{\#\scrE(K_n)/\scrC(K_n)\}).
\end{gather*}

The analytic class number formula gives
\[
\#A(K_n) = \#\scrE(K_n)/\scrC(K_n),
\]
and the computations $\rank_{\bbZ_p} A(K_\infty)_\scrG = 0$ and
$\rank_{\bbZ_p} (\scrE(K_\infty)/\scrC(K_\infty))_\scrG = \ep$ follow
from \eqref{E:SES-A} and \eqref{E:SES-EC}.  This establishes
\eqref{E:subtheorem}, and completes the proof of the theorem.

\thebibliography{Nak}

\bibitem[dS]{D} de~Shalit, Ehud, \emph{Iwasawa theory of elliptic curves
  with complex multiplication}.  Persectives in Math.\ {\bf 3},
  Academic Press Inc., Boston, MA, 1987.

\bibitem[HL]{HL} Harron, Robert and Lei, Antonio, Iwasawa theory of
  symmetric powers of CM modular forms at supersingular primes.  To
  appear in Journal de Th\'eorie des Nombres de Bordeaux.

\bibitem[Ka]{K} Kato, Kazuya, $p$-adic Hodge theory and values of zeta
  functions of modular forms.  Cohomologies $p$-adiques et
  applications arithm\'etiques III.  \emph{Ast\'erisque} {\bf 295}
  (2004), 117--290.

\bibitem[Nak]{Nak} Nakamura, Kentaro, Iwasawa theory of de~Rham
  $(\vphi,\Ga)$-modules over the Robba ring.
  \emph{J.\ Inst.\ Math.\ Jussieu} {\bf 13} (2014), no.\ 1, 65--118.

\bibitem[Nek]{N} \Nekovar, Jan, \emph{Selmer complexes}.
  \emph{Ast\'erisque} {\bf 310} (2006).

\bibitem[P]{P} Pottharst, Jonathan, Analytic families of finite-slope Selmer
  groups.  \emph{Algebra and Number Theory} {\bf 7} (2013), no.\ 7,
  1571--1612.

\bibitem[P2]{P2} Pottharst, Jonathan, Cyclotomic Iwasawa theory of motives.
  \emph{Preprint}, version 30 July 2012.

\bibitem[Ru]{R} Rubin, Karl, The ``main conjectures'' of Iwasawa theory for
  imaginary quadratic fields.  \emph{Invent.\ Math.} {\bf 103} (1991),
  no.\ 1, 25--68.

\bibitem[Ru2]{R2} Rubin, Karl, More ``Main Conjectures'' for imaginary quadratic
  fields.  \emph{Elliptic curves and related topics}, CRM
  Proc.\ Lecture Notes {\bf 4}, Amer.\ Math.\ Soc., Providence, RI,
  1994, 23--28.

\bibitem[ST]{ST} Schneider, Peter and Teitelbaum, Jeremy, Algebras of
  $p$-adic distributions and admissible representations.
  \emph{Invent.\ Math.} {\bf 153} (2003), no.\ 1, 145--196.

\end{document}